\newtheorem{theorem}{Theorem}[section]
\newtheorem{lemma}[theorem]{Lemma}
\newtheorem{corollary}[theorem]{Corollary}
\newcommand\bes{\begin{eqnarray}}
	\newcommand\ees{\end{eqnarray}}
\newcommand\R{\mathbb R}
\numberwithin{equation}{section}
\theoremstyle{plain}
\newtheorem*{theorem*}{Theorem A}
\newcommand\bess{\begin{eqnarray*}}
	\newcommand\eess{\end{eqnarray*}}
\newcommand{\lf}{\left}
\newcommand{\rr}{\right}
\begin{document}
	
	\title[Convergence to a receding wave]{Convergence to a receding wave in a monostable\\  free boundary problem}
	\author[H. Cao, Y. Du, W. Ni ]{Hongkai Cao$^\dag$,  Yihong Du$^\ddag$ and Wenjie Ni$^{\ddag}$}
	\thanks{\hspace{-.5cm}
				\mbox{\ \ $^{\dag}$} School of Mathematics, Shandong University, Jinan, China.
		\\
		\mbox{\ \ $^\ddag$} School of Science and Technology, University of New England, Armidale, NSW 2351, Australia.
		\\
		\mbox{\small \ \ \ \  Emails:} chk@mail.sdu.edu.cn (H. Cao),\ ydu@une.edu.au (Y. Du),\ wni2@une.edu.au (W. Ni)}
	\thanks{ The research of Y. Du and W. Ni was supported by the Australian Research Council.}
	
	\date{\today}

	\begin{abstract}
		We study a monostable reaction-diffusion equation of the form $u_t=du_{xx}+f(u)$ over a semi-infinite spatial domain $[g(t),\infty)$, with  $x=g(t)$ the free boundary whose evolution is governed by
		equations  derived from a ``preferred population density'' principle, which postulates that the species with population density $u(t,x)$ and population range $[g(t),\infty)$ maintains a certain density $\delta$ at the habitat edge $x=g(t)$.
		In the ``high-density'' regime, where $\delta$ exceeds the carrying capacity of the favourable environment represented by a monostable function $f(u)$, it is known (see \cite{DLNS} for the case of  a bounded population range $[g(t), h(t)]$) that for large time, the front retreats as time advances. In this work, the unboundedness of the population range $[g(t),\infty)$ allows us to prove that, as time $t$ converges to infinity,  the free boundary $x=g(t)$ converges to $\infty$ with a constant asymptotic speed $c(\delta)>0$ determined by an associated  semi-wave problem, and the population density $u(t,x)$ has the property that $u(t,x+g(t))$ converges uniformly  to $q_{c(\delta)}(x)$, the semi-wave profile function associated with the speed $c(\delta)$. It turns out that in the retreating situation considered here, some key techniques developed for advancing fronts in related free boundary models do not work anymore. This difficulty is overcome here by a  ``touching method", which uses  a family of  lower and upper solutions constructed from semi-waves of some carefully designed auxiliary problems to  touch the  solution $u(t,x)$ at the moving boundary $x=g(t)$, thereby generating a setting where the comparison principle can be used to obtain the desired estimates for $g'(t)$ and $u(t,x)$.  We believe this method will find applications elsewhere.		\bigskip
		
		\noindent \textbf{Keywords}: Reaction-diffusion equation; free boundary; retreating semi-wave; propagation speed.
		\medskip
		
		\noindent\textbf{AMS Subject Classification (2000)}: 35K57,
		35R20
		
	\end{abstract}
	
	\maketitle

	\section{Introduction and basic results}
	\setcounter{equation}{0}

	Reaction-diffusion equations of the form
	\begin{equation}\label{RD}
	u_t=du_{xx}+f(u), \ t>0,\ x\in\R,
	\end{equation}
	 particularly those of the Fisher-KPP type (\cite{Fisher1937, KPP1937}), have been successfully used to model  various biological and ecological phenomena, such as the spatial spread of invasive species. In such context, $u(t,x)$ stands for the population density of  a new or invasive species at time $t$ and spatial location $x$. If the initial population is localised in space, namely $u(0,x)$ is a nonnegative continuous function with non-empty but bounded support, then for a wide class of growth functions $f(u)$, including those of monostable type and bistable type, the dynamics of this model is largely determined by the associated traveling wave solutions (\cite{Fisher1937, KPP1937, AW1978, FM}), which are solutions to \eqref{RD} of the form $u(t,x)=\phi(x-ct)$, and therefore satisfies
	 \begin{equation}\label{tw}
	 d\phi''+c\phi'+f(\phi)=0 \mbox{ for } x\in\R.
	 \end{equation}
	 If $f(u)$ is of monostable type, namely
	\[
	(\mathbf{f_m}): \begin{cases}
		f \text{ is } C^{1}, \  f > 0 \text{ in } (0,1), \quad f < 0 \text{ in } (1, \infty), \\
		f(0) = f(1) = 0, \ f'(0) > 0 > f'(1),
	\end{cases}
	\]
then it is well known (see \cite{AW1978}, and also \cite{Fisher1937, KPP1937} for a narrower class of $f$) that
	there exists $c^*>0$ such that for every $c\geq c^*$, \eqref{tw} has a unique solution $\phi_c(x)$ satisfying 
	\[
	\phi_c(-\infty)=1,\ \phi(0)=1/2,\ \phi_c(+\infty)=0,
	\]
	and no such solution exists when $c<c_0$. Moreover, the constant $c_0$ serves as the propagation speed of the population modelled by \eqref{RD} with a localized initial population $u(0,x)$. More precisely, it follows from \cite{AW1978} that, if $u(t,x)$ is the unique solution of \eqref{RD} with such an initial function $u(0,x)$, then, for any small $\epsilon>0$,
	\[
	\lim_{t\to\infty} u(t,x)=\begin{cases} 1 \mbox{ uniformly for } |x|\leq (c_0-\epsilon)t,\\
	0 \mbox{ uniformly for } |x|\geq (c_0+\epsilon) t.
	\end{cases}
	\]
	This implies that if we fix a small $\tilde\epsilon>0$ and use 
	\[
	\Omega_{\tilde\epsilon}(t):=\{x: u(t,x)\geq \tilde\epsilon\}
	\]
	 as an approximation of the population range at time $t$,
	then $\Omega_{\tilde\epsilon}(t)$ propagates to the entire $\R$ with asymptotic speed $c_0$. Furthermore, there exists $\epsilon_+(t)$ and $\epsilon_-(t)$ satisfying $\epsilon_{\pm}(t)=o(t)$ as $t\to\infty$ such that
	\[
	\begin{cases}
	u(t,x)-\phi_{c_0}(x-c_0t +\epsilon_+(t))\to  0 \mbox{ uniformly for $x\geq 0$,}\\
	u(t,x)-\phi_{c_0}(c_0t+\epsilon_-(t)-x)\to 0 \mbox{ uniformly for } x\leq 0,
	\end{cases} \mbox{ as $t\to\infty$.}
	\]
	If $f(u)$ satisfies additionally the KPP condition $f(u)\leq f'(0)u$ for $u\geq 0$ (\cite{KPP1937}), then $c_0=2\sqrt{ f'(0) d}$.
	
	These classical results have been extended and further developed along various lins in the last several decades. Let us explain in particular one such development initiated in \cite{DuLin2010}, where in order to provide a precise population range in the model, \eqref{RD} was modified into a free boundary problem of the form
\begin{equation}\label{dulin}
		\left\{
		\begin{array}{ll}
			u_{t} - d u_{xx} = f(u), & t > 0, \quad  g(t)<x<h(t), \\
			u(t, g(t)) = u(t, h(t))=0, & t > 0, \\
			g'(t) = -\mu u_{x}(t, g(t)), & t > 0, \\
			h'(t) = -\mu u_{x}(t, h(t)), & t > 0, \\
			-g(0) = h(0)=h_0, \quad u(0, x) = u_{0}(x), &  -h_0\leq x\leq h_{0},
		\end{array}
		\right.
	\end{equation}	
	where $h_0$ and $\mu$ are given positive constants. This model provides a precise population range $[g(t), h(t)]$ for any given 
	 time $t>0$, without involving an artificial constant $\tilde\epsilon$ as in $\Omega_{\tilde\epsilon}(t)$ for \eqref{RD}.  The model \eqref{dulin} exhibits a spreading-vanishing dichotomy for its long-time dynamics (\cite{DuLin2010, DuLou}): As $t\to\infty$, either $[g(t), h(t)]$ converges to a finite interval $[g_\infty, h_\infty]$ and $u(t,x)\to 0$ uniformly (the vanishing case), or $[g(t), h(t)] \to \mathbb R$ and $u(t,x)\to 1$ (the spreading case). Moreover, in the latter case,  the long-time dynamics of \eqref{dulin} is also determined by the associated traveling wave solutions, which are called semi-waves, consisting of a unique pair $(c, q(z))$ with $c>0$ such that 
	\begin{equation*}
		\left\{
		\begin{array}{l}
			dq'' - c q' + f(q) = 0 \quad \text{for} \quad z \in (0, \infty), \\
			q(0) = 0, \quad q(\infty) = 1, \quad q'(0)=c/\mu;
		\end{array}
		\right.
	\end{equation*}
more precisely, when spreading happens, the propagation of \eqref{dulin} is characterized by (\cite{DMZ})
	\begin{align*}
 	&  \lim_{t \to \infty} h'(t) = c, \quad \lim_{t \to \infty} g'(t) = -c,\\
 		&\lim_{t \to \infty} \sup_{x \in [0, h(t)]} |u(t, x) - q( h(t) - x)| = 0,\\
 			&\lim_{t \to \infty} \sup_{x \in [g(t), 0]} |u(t, x) - q( x - g(t))| = 0.
 \end{align*}
	
	A shortcoming of \eqref{dulin} as a model for species spreading  is that the population range $[g(t), h(t)]$ always expands as time increases.
	In the real world, the population range may expand as well as shrink when time increases. For example, it has been observed that in the cane toads invasion in Australia, 
	 temporary front retreats occurred in some cold climate areas  of New South Wales \cite{M-BI}. 
	 
	 This shortcoming of \eqref{dulin} has been eliminated by a further modification of the model given in \cite{Du2024}, which considers the problem
	 \begin{equation}\label{a}
\begin{cases}
u_t-du_{xx}=f(u),  & t>0,\; g(t)<x<h(t),\\
u(t,g(t))=u(t,h(t))=\delta, & t>0,\\
g'(t)=-\frac{d}{\delta}u_{x}(t,g(t)), &  t>0,\\
h'(t)=-\frac{d}{\delta}u_x(t,h(t)), &  t>0,\\
-g(0)=h(0)=h_0, u(0,x)=u_0(x), & -h_0\leq x\leq h_0.
\end{cases}
\end{equation}
Here
 $f$ is assumed to satisfy ${\bf (f_m)}$, and
 the constant $\delta\in(0,1)$ in \eqref{a} represents a preferred density of the species, and the equations governing the evolution of the free boundaries $x=g(t)$ and $x=h(t)$, namely the second, third and fourth equations in \eqref{a}, can be deduced from the biological assumption that the species maintains its preferred  density $\delta$ at the range boundary by advancing or retreating the fronts; see \cite{Du2024} for a detailed deduction and more background. This assumption means that for members of the species near the range boundary, their movement in space is governed by two factors: (a) random movement similar to all other members, and (b) advance or retreat to keep the density at the front at the preferred level $\delta$.
 
It is easy to see that in \eqref{a}, the fronts $x=h(t)$ and $x=g(t)$ 
 may advance or retreat as time increases, depending on how the population $u(t,x)$ is distributed in $x\in [g(t), h(t)]$ at time $t$.
It was shown in \cite{Du2024} that the unique solution $(u(t,x), g(t), h(t))$ of  \eqref{a} exhibits successful spreading for all admissible initial data\footnote{namely for every $h_0>0$ and $u_0\in X(h_0):=\{\phi\in C^2([-h_0, h_0]): \phi(\pm h_0)=\delta, \ \phi>0 \ \text{in} \ [-h_0, h_0]\}$.}, and
 \begin{align*}
 	&\lim_{t \to \infty} h'(t) = c_\delta^*, \quad \lim_{t \to \infty} g'(t) = -c_\delta^*,\\
 		&\lim_{t \to \infty} \sup_{x \in [0, h(t)]} |u(t, x) - q_\delta^*( h(t) - x)| = 0,\\
 			&\lim_{t \to \infty} \sup_{x \in [g(t), 0]} |u(t, x) - q_\delta^*( x - g(t))| = 0,
 \end{align*}
where $(c_\delta^*, q_\delta^*)$ is the unique solution pair $(c,q)$ of 
		\begin{equation}\label{semi1}
			\begin{cases}
				dq'' - cq' + f(q) = 0, \quad q > 0 \quad \text{in } (0,\infty), \\
				q(0) = \delta, \quad q(\infty) = 1, \quad q'(0) = \frac{c \delta}{d},\quad q'>0 \mbox{ in } [0,\infty).
			\end{cases}
		\end{equation}
	
If $\delta>1$ in \eqref{a}, then it follows from the recent work \cite{DLNS} that the population modelled by \eqref{a} vanishes, namely there exists some $\xi_*\in\R$ such that
\[
\lim_{t\to\infty} g(t)=\lim_{t\to\infty} h(t)=\xi_*,\ \lim_{t\to\infty} u(t,x)=\delta \mbox{ uniformly in } x\in[g(t), h(t)].
\]
Thus in such a case, the population range shrinks to one point and the population vanishes by losing all its habitat. 
	
	In a related but different direction, the authors of \cite{Simpson2022} recently investigate a Fisher-KPP model of the form
		 \begin{equation}\label{b}
\begin{cases}
u_t-u_{xx}=u(1-u),  & t>0,\; 0<x<s(t),\\
u_x(t,0)=0,\ u(t,s(t))=\delta, & t>0,\\
h'(t)=-\kappa u_x(t,s(t)), &  t>0,\\
s(0)=h_0, u(0,x)=u_0(x), & 0\leq x\leq h_0,
\end{cases}
\end{equation}
	where $\delta\in (0,1)$ but $\kappa$ may take positive as well as negative values, and the initial function $u_0(x)$ is piece-wisely linear: $u_0(x)$ equals $1$ over $[0,\beta]$ for some $\beta\in (0, h_0)$, and $u_0(h_0)=\delta$. The numerical simulation results in \cite{Simpson2022} indicate that when $\kappa>0$, then $u(t,x)$ behaves like a propagating semi-wave as $t$ increases, much as what was proved for \eqref{a} in \cite{Du2024}, but when $\kappa<0$ and $h_0>0$ is large, the density function $u(t,x)$ behaves like a retreating wave before $s(t)$ reaches $0$. The retreating wave to \eqref{b} with $\kappa<0$ was carefully examined in \cite{Simpson2022}.
	
	In this paper, we consider a retreating wave problem related to but different from both  \cite{DLNS} and \cite{Simpson2022}. Our problem here arises from an on-going work on a model for propagation in a shifting environment--the mathematical techniques developed here will be one of the main technical ingredients  in this forthcoming work.
	
	To be more precise,  in this paper we consider the following problem
	\begin{equation}\label{1.1}
		\left\{
		\begin{array}{ll}
			u_{t} - d u_{xx} = f(u), & t > 0, \quad x > g(t), \\
			u(t, g(t)) = \delta, & t > 0, \\
			g'(t) = -\dfrac{d}{\delta} u_{x}(t, g(t)), & t > 0, \\
			g(0) = g_{0}, \quad u(0, x) = u_{0}(x), &  x\geq g_{0},
		\end{array}
		\right.
	\end{equation}
	where $d>0$, $\delta>1$, $g_0\in \mathbb{R}$ and $f:[0, \infty) \rightarrow \mathbb{R} $ satisfies ${\bf (f_m)}$. The initial function \( u_{0}(x) \) is assumed to belong to
	\[
	\mathcal{X}(g_{0}) := \left\{ \phi \in C^{2}\bigl( [g_{0},\infty) \bigr) : \|\phi\|_{C^2([g_0, \infty))}<\infty, \inf_{x\geq g_0}\phi(x) > 0,   \,  \phi(g_{0}) = \delta \right\}.
	\]
	
	\medskip
	
		Our first result concerns the global existence and uniqueness of the solution, which will be proved by further developing the existing techniques in \cite{Du2024, DLNS}.
	\begin{theorem}\label{th1.1}
		Suppose that $(\mathbf{f_m})$ holds, $u_{0} \in \mathcal{X}(g_{0})$,  $ \alpha \in (0, 1)$ and $\delta>1$. Then, \eqref{1.1} admits a unique solution
		\begin{align*}
			(u,  g) \in C^{1+\frac{\alpha}{2}, 2+\alpha}(\Omega_{\infty}) \times C^{1+\frac{\alpha}{2}}((0, \infty)),
		\end{align*}
		where $\Omega_{\infty}:=\left\{(t, x) \in \mathbb{R}^{2}: t \in(0, \infty), \,x \in[g(t), \infty)\right\}.$
	\end{theorem}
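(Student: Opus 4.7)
The plan is to adapt the scheme used in \cite{Du2024, DLNS}, the principal novelty being the unboundedness of the spatial domain $[g(t), \infty)$. First, I would straighten the free boundary by the change of variables $y := x - g(t)$, $v(t, y) := u(t, y + g(t))$, which transforms \eqref{1.1} into the quasilinear parabolic problem
\begin{equation*}
v_t - d v_{yy} - g'(t)\, v_y = f(v), \quad v(t, 0) = \delta, \quad g'(t) = -\tfrac{d}{\delta}\, v_y(t, 0),
\end{equation*}
on the fixed half-line $\{y \geq 0\}$ with initial data $v(0, y) = u_0(y + g_0)$ and $g(0) = g_0$. This trades the moving boundary for a drift coefficient $-g'(t)$ which is itself determined nonlocally by the boundary derivative of $v$.

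For local existence and uniqueness I would run a contraction mapping argument on the set
\[
\mathcal{G}_T := \big\{\, g \in C^{1+\alpha/2}([0,T]) : g(0) = g_0,\ \|g - g_0\|_{C^{1+\alpha/2}([0,T])} \leq K \,\big\},
\]
with $T$ small and $K$ chosen in terms of $\|u_0\|_{C^2}$. Given $g \in \mathcal{G}_T$, parabolic Schauder theory on the half-line produces a unique $v \in C^{1+\alpha/2,\,2+\alpha}$ (the compatibility condition at $(t, y) = (0, 0)$ is ensured by $u_0 \in \mathcal{X}(g_0)$), after which I update the free boundary via $\widetilde g(t) := g_0 - \tfrac{d}{\delta}\int_0^t v_y(s, 0)\, ds$. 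Boundary parabolic estimates bound $\widetilde g$ in $C^{1+\alpha/2}$, and for $T$ sufficiently small the map $g \mapsto \widetilde g$ becomes a contraction in $C^1([0, T])$, yielding the unique local solution.

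To extend globally I would derive time-independent bounds on any maximal interval $[0, T_{\max})$. An upper bound $u \leq M := \max(\delta, \|u_0\|_\infty)$ follows by comparison with the constant supersolution $M$, since $M \geq 1$ forces $f(M) \leq 0$. A positive lower bound $u \geq m(t) > 0$ is produced by the ODE $m'(t) = f(m(t))$ with $m(0) := \inf_{x \geq g_0} u_0(x) > 0$; because $f$ is monostable, $m$ is trapped in the interval $[\min(m(0), 1), \max(m(0), 1)] \subseteq (0, \delta]$, so $m$ is an admissible spatially-constant subsolution. With $u$ pinned between two positive constants, boundary Hölder and Schauder estimates applied to $v$ provide a uniform bound on $v_y(t, 0)$, hence on $|g'(t)|$, ruling out finite-time blow-up of $g$. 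A standard continuation argument then upgrades the local solution to a global one, and uniqueness transfers from the local contraction.

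The main obstacle I anticipate is the uniform boundary estimate on $v_y(t, 0)$: unlike the two-sided problem \eqref{a} treated in \cite{Du2024, DLNS}, here there is no confining right boundary and no spatial decay is assumed on $u_0$, so boundary estimates at $y = 0$ must be combined with interior estimates propagated from within the semi-infinite strip. However, once the a priori $L^\infty$ bounds above are in hand, local parabolic regularity (applied on slabs of bounded spatial width near the moving front and supplemented by interior estimates further out) reduces the problem to a routine bootstrap, so no essentially new technique beyond those already developed in \cite{Du2024, DLNS} should be needed for Theorem \ref{th1.1}.
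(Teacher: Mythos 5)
Your overall architecture (straighten the boundary, contraction mapping for local existence, $L^\infty$ bounds plus a bound on $|g'|$ for continuation) matches the paper's, and your $L^\infty$ bounds for $u$ (constant supersolution $\max(\delta,\|u_0\|_\infty)$ from above, the ODE $m'=f(m)$ from below) are essentially the paper's Step 1. But there is a genuine gap at the step you yourself flag and then wave away: the uniform bound on $v_y(t,0)$, i.e.\ on $|g'(t)|$. You assert that once $u$ is pinned between positive constants, ``boundary H\"older and Schauder estimates applied to $v$ provide a uniform bound on $v_y(t,0)$.'' This is circular: the equation for $v$ on the half-line has drift coefficient $g'(t)$, and every boundary gradient/Schauder estimate you would invoke requires an a priori bound on that coefficient (equivalently, in the original variables, on the Lipschitz norm of the moving boundary) --- which is exactly the quantity you are trying to estimate. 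An $L^\infty$ bound on $u$ alone does not control $u_x$ at the free boundary. The paper closes this loop with a two-sided barrier construction near $x=g(t)$ (Lemma \ref{le2.7}, Steps 2--4): an exponential-profile lower solution $\underline u$ gives the upper bound $g'\le dm$ on a maximal interval $[0,T_1]$ defined precisely as the interval on which that bound holds, a quadratic upper solution (whose supersolution inequality \emph{uses} the bound $g'\le dm$) gives the lower bound on $g'$, and the Hopf lemma at $t=T_1$ shows $T_1=T$. Some such barrier-plus-continuation argument is the essential content of the global existence proof and cannot be replaced by citing parabolic regularity.

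A secondary, fixable issue: in the local existence step you claim that Schauder theory yields $v\in C^{1+\alpha/2,2+\alpha}$ up to $t=0$ and that ``the compatibility condition at $(0,0)$ is ensured by $u_0\in\mathcal{X}(g_0)$.'' Membership in $\mathcal{X}(g_0)$ only gives zeroth-order compatibility $u_0(g_0)=\delta$; optimal Schauder regularity at the corner would additionally require $d u_0''(g_0)+g'(0)u_0'(g_0)+f(\delta)=0$, which is not assumed, and $u_0$ is only $C^2$, not $C^{2+\alpha}$. The paper instead runs the contraction in the lower topology furnished by $L^p$ theory and Sobolev embedding (obtaining $C^{(1+\alpha)/2,1+\alpha}$), and only afterwards upgrades to $C^{1+\alpha/2,2+\alpha}$ for $t>0$ via a cutoff in time and interior-in-time Schauder estimates. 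Your iteration should be reorganized accordingly, or set up as the paper does on the product space of pairs $(U,g)$.
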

	
	Our main result below concerns the long-term dynamics of \eqref{1.1}, which shows that it is determined by the associated retreating wave. 	
	\begin{theorem}\label{th1.2} Under the conditions of Theorem \ref{th1.1}, let
		 $(u,g)$ be the unique solution of \eqref{1.1} with $u_{0} \in \mathcal{X}(g_{0})$. Then 
		\begin{align}
			&\lim_{t \to \infty}g(t)=\infty, \quad \lim_{t\rightarrow \infty} g'(t)=c(\delta),\label{1.2}\\
			&	\lim_{t \to \infty}\sup_{x \geq g(t)}|u(t,x)-q^*(x-g(t))|=0,\label{1.3}
		\end{align}
		where $c(\delta):=-c^*>0$ and $(c^*, q^*(z))$ is the unique pair $(c, q(z))$ satisfying
			\begin{equation}\label{eq:part2}
				\begin{cases}
					d q'' - c q' + f(q) = 0,\ q'<0 \quad \mbox{for } z > 0, \\
					q(0) = \delta, \quad q'(0) = \dfrac{c \delta}{d}, \quad  q(\infty) = 1.	
				\end{cases}
			\end{equation}
		 Moreover, \( c(\delta) \) is increasing in \( \delta \) for \( \delta > 1 \), and \[ \lim_{\delta \to 1^{+}} c(\delta) = 0 .\]
	\end{theorem}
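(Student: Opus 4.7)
The plan is to combine a phase-plane analysis of \eqref{eq:part2} with the \emph{touching method} advertised in the abstract. A direct computation with $c^*=-c(\delta)$ shows that $U(t,x)=q^*(x-c(\delta)t)$ on $x\geq c(\delta)t$ is an exact traveling wave solution of the free boundary problem \eqref{1.1}, so the theorem is claiming exactly that $u$ converges to this retreating wave.

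Existence, uniqueness and qualitative properties of $(c^*,q^*)$ would follow from studying $dq''-cq'+f(q)=0$ for $c<0$ in the $(q,q')$ phase plane: the equilibrium $(1,0)$ is a saddle and the branch of its stable manifold in $\{q>1,\,q'<0\}$ is a unique trajectory, which followed backwards meets $q=\delta$ with some slope $P(c,\delta)$. The remaining boundary condition becomes the scalar equation $P(c,\delta)=c\delta/d$; monotonicity of $P(\cdot,\delta)$ in $c$ yields a unique root $c^*(\delta)<0$. Comparing the trajectories for different $\delta$ gives strict monotonicity of $c(\delta)=-c^*(\delta)$, and linearising the saddle manifold at $(1,0)$ as $\delta\downarrow 1$ gives $c(\delta)\to 0$. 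Preliminary asymptotics are standard: parabolic comparison gives $u$ bounded and $u(t,x)\to 1$ on any moving window well inside $(g(t),\infty)$, and Hopf's lemma (using $\delta>1$) then forces $u_x(t,g(t))<0$, hence $g'(t)>0$ and in fact $g'(t)\geq\eta>0$ eventually, so $g(t)\to\infty$.

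The crux is showing $g'(t)\to c(\delta)$, and this is where the touching method enters. For any $c'>c(\delta)$, using the monotonicity of $c(\cdot)$ from the previous step I would pick an auxiliary $\delta'>\delta$ with $c(\delta')\in(c(\delta),c')$ and arrange a translated copy of $q^*_{\delta'}$ as a barrier whose free boundary is designed to first touch $x=g(t)$ at some time $t_0$. The standard Stefan-type comparison for \eqref{1.1} is unavailable because in the retreating geometry the two free boundaries cannot be strictly nested one inside the other; the workaround is to apply the parabolic Hopf boundary-point lemma to the ordered difference on the common half-line $\{x\geq g(t_0)\}$ at the contact point, producing a strict inequality between $u_x$ and the barrier's $x$-derivative and, via the two free boundary laws, forcing $g'(t_0)\leq c(\delta')<c'$. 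Propagating the contact analysis yields $\limsup g'(t)\leq c'$, and an analogous lower barrier built from $q^*_{\delta''}$ with $\delta''\in(1,\delta)$ gives $\liminf g'(t)\geq c(\delta)$.

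Once the speed is settled, \eqref{1.3} follows by a compactness argument on $v(t,y):=u(t,y+g(t))$: interior Schauder estimates make $\{v(t+\cdot,\cdot)\}$ relatively compact in $C^{1,2}_{\mathrm{loc}}$, any limit $V$ is an entire bounded solution of $V_t-dV_{yy}-c(\delta)V_y=f(V)$ on $y\geq 0$ with $V(\cdot,0)=\delta$, $V\geq 1$, and $c(\delta)=-(d/\delta)V_y(\cdot,0)$, and the uniqueness in the ODE step forces $V\equiv q^*$. Exponential decay $q^*\to 1$ at infinity together with $u(t,\cdot)\to 1$ on right tails upgrades local to uniform convergence on the whole half-line $x\geq g(t)$. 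I expect the touching step to be the main obstacle for exactly the reason given in the abstract: because the retreating free boundary cannot be enclosed strictly inside (or strictly outside) a barrier's free boundary, the usual Stefan-type comparison fails, and the Hopf-lemma contact analysis is the new ingredient that rescues the speed estimate.
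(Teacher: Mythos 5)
Your overall architecture matches the paper's (phase-plane construction of $(c^*,q^*)$ via the stable manifold of the saddle $(1,0)$ and the scalar equation $P(c,\delta)=c\delta/d$; preliminary asymptotics $g(t)\to\infty$ and $u\to1$ on right tails; a contact-point Hopf argument for the speed; then profile convergence), but the specific barrier you propose for the crucial speed estimate does not work. You take as upper barrier a translate of $q^*_{\delta'}$ with $\delta'>\delta$ and wait until its free boundary first touches $x=g(t)$. At that contact time the two free boundaries coincide, but the two \emph{functions} do not: the barrier equals $\delta'$ at its own front while $u(t,g(t))=\delta<\delta'$, so the difference $W=\bar u-u$ satisfies $W=\delta'-\delta>0$ at the contact point. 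The Hopf boundary-point lemma requires $W$ to vanish at the boundary point in question; here it does not, so no comparison of $u_x$ with the barrier's $x$-derivative --- and hence no bound on $g'$ via the two free boundary laws --- follows. The paper avoids this by keeping the boundary value $\delta$ fixed and perturbing the \emph{nonlinearity} instead: it uses semi-waves of $f_\varepsilon^1<f<f_\varepsilon^2$ whose limits at $+\infty$ are $1\mp\varepsilon$ (which also secures the strict ordering on the unbounded tail, where your barrier and $u$ both tend to $1$), so that at the touching time the graphs genuinely meet at the point $(g(t_0),\delta)$ and Hopf applies. Because those perturbed semi-waves are taken with non-optimal speeds, a single touch only improves the speed bound by one increment, and the paper must iterate $\bar c_{n+1}=\frac{d}{\delta}\bar q_n'(0)+\frac{1}{M+n}\searrow c_2^*(\varepsilon)$ (and similarly from below) before letting $\varepsilon\to0$; your proposal has no substitute for this iteration.

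Two further gaps. First, a touching argument of the type you describe yields a bound on $g'$ only at the single contact time $t_0$; the paper needs a separate sliding argument (re-choosing the translation parameter $L$ so that a hypothetical first time $T$ with $g'(T)=\beta_1$ is itself a touching time for a suitable member of the family) to propagate the derivative bound to all $t\ge t_0$, and your phrase ``propagating the contact analysis'' must be made precise along these lines. Second, for \eqref{1.3} your compactness argument leaves open why an entire bounded solution $V$ of the limit problem with $V(\cdot,0)=\delta$ and $V_y(\cdot,0)=\delta c^*/d$ must be time-independent; the ``uniqueness in the ODE step'' applies only to stationary profiles. The paper's main proof sidesteps this entirely by sandwiching $u(t,\cdot)$ between $\underline q_N(\cdot-g(t))$ and $\bar q_N(\cdot-g(t))$, both of which converge to $q^*$ uniformly on $[0,\infty)$; its alternative route to \eqref{1.3} needs a zero-number lemma precisely to rule out nontrivial entire solutions.
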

	This result indicates that for large time, the population range $[g(t), \infty)$ shinks as time increases, with $x=g(t)$ converging to $\infty$ with a constant asymptotic speed $c(\delta)$ as $t\to\infty$. At the same time,
	the shifted density function $u(t,x+g(t))$ converges to the retreating semi wave profile $q^*(x)$.

	The rest of this paper is organized as follows. Section~2 is devoted to the proof of the well-possedness result Theorem \ref{th1.1}, by further developing existing techniques in \cite{Du2024, DLNS}. The main novelty of the paper is contained in Section 3, where we complete the proof of Theorem \ref{th1.2} in three subsections. In particular, subsection 3.2 presents several results on semi-waves, which are used in subsection 3.3, where  the proof of Theorem~\ref{th1.2} is completed by employing a new technique, called the ``touching method", which uses families of upper and lower solutions obtained from certain carefully designed auxiliary problems in subsection 3.2. 	In order not to interrupt the  flow of  thoughts, the proofs of the results on semi-waves in subsection 3.2 are postponed to Section 4.
	
	\section{Proof of Theorem \ref{th1.1}}
	
	\subsection{Local existence and uniqueness}
	
	In the following local existence result, we consider a more general function $f(u)$.
	\begin{theorem}\label{th2.1}
		Suppose that $f$ is $C^{1}$ and satisfies $f(0)=0$. Then, for any given $u_{0} \in \mathcal{X}(g_{0})$ and $\alpha \in (0, 1)$, there
		is a constant $T > 0$ such that problem \eqref{1.1} admits a unique solution
		\[(u, g) \in C^{(1+\alpha) / 2,1+\alpha}\left(\overline{\Omega}_{T}\right) \times C^{1+\alpha / 2}([0, T]),\]	where $\Omega_{T}=\left\{(t, x) \in \mathbb{R}^{2}: t \in(0, T], x \in[g(t),\infty)\right\}$.
	\end{theorem}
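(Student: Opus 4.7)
The plan is to convert \eqref{1.1} into a semilinear parabolic problem on the fixed semi-infinite domain $[0,\infty)$ by straightening the free boundary, and then apply a contraction mapping argument in the spirit of \cite{DuLin2010, Du2024, DLNS}.

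First I would set $y=x-g(t)$ and $v(t,y):=u(t,y+g(t))$. Under this change of variables, \eqref{1.1} becomes
\[
v_t-dv_{yy}-g'(t)v_y=f(v)\quad\text{for } t\in(0,T],\ y>0,
\]
with $v(t,0)=\delta$, $v(0,y)=u_0(y+g_0)=:v_0(y)$, together with the nonlocal constraint $g(0)=g_0$, $g'(t)=-(d/\delta)v_y(t,0)$. Since $u_0\in\mathcal X(g_0)$, we have $v_0\in C^2([0,\infty))$ with $\|v_0\|_{C^2}<\infty$, $\inf v_0>0$ and $v_0(0)=\delta$, which provides the compatibility needed at the corner $(t,y)=(0,0)$.

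Next I would set up the iteration. For constants $T, M>0$ to be chosen, consider the complete metric space
\[
\mathcal G_{T,M}:=\bigl\{g\in C^{1+\alpha/2}([0,T]):\, g(0)=g_0,\ g'(0)=-(d/\delta)u_0'(g_0),\ [g']_{C^{\alpha/2}([0,T])}\leq M\bigr\}
\]
with the induced $C^{1+\alpha/2}$ norm. For each fixed $g\in\mathcal G_{T,M}$, solve the straightened semilinear problem above; since $f$ is $C^1$, $g'$ is bounded in $C^{\alpha/2}$, and $v_0$ is in $C^2$ with the compatibility condition holding, interior-plus-boundary Schauder theory on the half-line $[0,\infty)$ yields a unique bounded solution $v\in C^{(1+\alpha)/2,\,1+\alpha}([0,T]\times[0,\infty))$ provided $T$ is sufficiently small, with a priori bounds depending only on $\|v_0\|_{C^2}$, $M$ and $\sup|f|$ on the range of $v$. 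In particular one gets a $C^{\alpha/2}([0,T])$ estimate of $v_y(\cdot,0)$. I would then define the update map $\Phi:\mathcal G_{T,M}\to C^{1+\alpha/2}([0,T])$ by
\[
(\Phi g)(t):=g_0-\frac d{\delta}\int_0^t v_y(s,0)\,ds,
\]
so that any fixed point of $\Phi$ produces a local solution $(u,g)$ of \eqref{1.1} via $u(t,x)=v(t,x-g(t))$ and vice versa. Using the Schauder estimate on $v_y(\cdot,0)$ together with the Lipschitz dependence of $v$ on $g'$ through the transport term $g'(t)v_y$ (obtained by subtracting the equations satisfied by $v^{(1)},v^{(2)}$ corresponding to two choices $g_1,g_2\in\mathcal G_{T,M}$), one shows that for $M$ sufficiently large in terms of the initial data and then $T$ sufficiently small, $\Phi$ maps $\mathcal G_{T,M}$ into itself and is a strict contraction. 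The fixed point is the desired solution, and its uniqueness follows from the contraction.

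The main obstacle is to obtain the Schauder bound on $v_y(\cdot,0)$ and its Lipschitz dependence on $g'$ uniformly over the semi-infinite spatial domain $[0,\infty)$, where the usual compactness arguments on bounded intervals break down. I would handle this by combining the interior-plus-boundary Schauder estimates (which are valid uniformly in $y$ because the coefficients are merely $t$-dependent and bounded, and $v_0$ is bounded in $C^2$ with $\inf v_0>0$) with a truncation at a large $L$ and a passage to the limit $L\to\infty$, mirroring the technique used in \cite{Du2024, DLNS} for analogous half-line problems. Once this uniform estimate is in place, the contraction closes and local existence and uniqueness in the stated class follow.
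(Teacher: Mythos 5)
Your proposal is correct and follows essentially the same route as the paper: straighten the free boundary via $y=x-g(t)$, run a contraction mapping argument for small $T$, and control the unbounded spatial domain through parabolic estimates that are uniform over unit space-time boxes. The only structural difference is that you iterate on $g$ alone and solve the full semilinear problem at each step (via Schauder theory), whereas the paper iterates on the pair $(U,g)$ and solves a linear problem with $f(U)$ frozen as data (via $L^{p}$ theory and Sobolev embedding); both variants close in the same way, exploiting $(\Phi g_1)'(0)=(\Phi g_2)'(0)$ to extract a small factor $T^{\alpha/2}$.
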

	
	\begin{proof}
		We will complete the proof in three steps. In Step 1, we rewrite the problem into one with fixed straight boundaries and reduce the existence problem into a fixed point problem.  In Step 2, we establish the local existence and uniqueness of the solution by applying the contraction mapping theorem combined with an extension technique. Finally, in Step 3, we employ Schauder theory to obtain additional smoothness of the solution.

		\textbf{Step 1.} We straighten the lateral boundary of the domain $\Omega_{T}$ and reduce the existence problem to a fixed point problem, where we make use of an extension technique combined with $L^{p}$ theory and Sobolev embedding theorems.  
		
		Define $U(t, y) := u(t, y+g(t))$ for $(t, y+g(t))=(t, x) \in \Omega_{T}$. Then \eqref{1.1} for $t \in (0, T]$ is equivalent to  
		\begin{equation}\label{4.1}
			\left\{
			\begin{array}{ll}
				U_{t} - g'(t) U_y - d U_{yy} = f(U), & t > 0,  y > 0, \\
				U(t,0) = \delta, & t > 0, \\
				U_{y}(t, 0) = -\dfrac{\delta}{d} g'(t), & t > 0, \\
				U(0, y) = u_{0}(y + g_0), & y \geq 0.
			\end{array}
			\right.
		\end{equation}

		Let $H := \max \{1, |g^{0}| \}$ where $g^{0} = -\frac{d}{\delta}u_{0}^{\prime}(g_{0})$. For $T \in (0, 1]$ and $D_{T} := \{(t, y): 0 \leq t \leq T, y \geq 0\}$, define the function spaces  
		\[
		\begin{array}{l}
			X_{U, T} := \left\{U \in C\left(D_{T}\right): U(0, y) = u_{0}(y),\, \| U - u_{0} \|_{C\left(D_{T}\right)} \leq 1\right\}, \\[2mm]
			X_{g, T} := \left\{g \in C^{0,1}([0, T]): g(0) = g_{0},\, g^{\prime}(0) = g^{0},\, \| g^{\prime} - g^{0} \|_{L^{\infty}([0, T])} \leq H\right\}.
		\end{array}
		\]
		The product space $X_{T} := X_{U, T}\times X_{g,T}$ is a complete metric space with the metric:  
		\[
		d\left((U_{1}, g_{1}),\, (U_{2}, g_{2})\right) := \| U_{1} - U_{2} \|_{C(D_{T})} + \| g_{1}^{\prime} - g_{2}^{\prime} \|_{L^{\infty}([0, T])}.
		\]  
		For $0 < T < 1$, define the subspace $X_{1}^{T} = X_{U, 1}^{T}\times X_{g,1}^T$ where  
		\[
		\begin{array}{l}
			X_{U, 1}^{T} := \left\{U \in X_{U, 1}: U(t, y) = U(T, y) \text{ for } T \leq t \leq 1\right\}, \\[2mm]
			X_{g, 1}^{T} := \left\{g \in X_{g, 1}: g(t) = g(T) \text{ for } T \leq t \leq 1\right\}.
		\end{array}
		\]
		Since functions  in $X_{T}$ can be extended to $X_{1}^{T}$ via the above definitions, we will henceforth identify $X_{T}$ with $X_{1}^{T}$.  
		
		For each $(U, g) \in X_{T} = X_{1}^{T}$, consider  
		\begin{equation}\label{2.2}
			\left\{\begin{array}{ll}
				\bar{U}_{t} - d \bar{U}_{yy} - g' \bar{U}_{y} = f(U), & 0 < t \leq 1, \, y > 0, \\
				\bar{U}(t,0) = \delta, & 0 < t \leq 1, \\
				\bar{U}(0, y) = u_{0}(y + g_0), & y \geq 0.
			\end{array}\right.
		\end{equation}
		Since $\|g'(t)\|_{L^{\infty}([0, T])} \leq H + |g^0|$ and $u_{0} \in C^{2}\left([g_{0}, \infty)\right)$ with $u_{0}(g_{0}) = \delta$, we apply $L^{p}$ theory to \eqref{2.2} and the Sobolev embedding theorem \cite{GML} to conclude that for $\alpha \in (0,1)$, \eqref{2.2} admits a unique solution $\bar{U}$ satisfying  
		\begin{equation}\label{2.6}
			\|\bar{U}\|_{C^{\frac{1+\alpha}{2}, 1+\alpha}\left([0, 1] \times [m, m+1]\right)} \leq C_{1} \|\bar{U}\|_{W_{p}^{1,2}\left([0, 1] \times [m, m+1]\right)} \leq K_{1},
		\end{equation}
		where $p > 3/(2-\alpha)$. The constant $K_{1}$ depends on $p$, $\| f(U) \|_{L^{\infty}(D_{1})}$, $\| u_{0} \|_{C^{2}([g_{0}, \infty))}$, $g_{0}$, $g^{0}$, and $C_{1}$, but not on $m$; $C_{1}$ depends on $D_{1}$ and $\alpha$.  
		
		Define $\bar{g}(t)$ via $\bar{g}'(t) = -\frac{d}{\delta} \bar{U}_{y}\left(t, 0\right)$. Then $\bar{g}' \in C^{\frac{\alpha}{2}}\left([0, 1]\right)$ with  
		\begin{equation}\label{2.7}
			\|\bar{g}'\|_{C^{\frac{\alpha}{2}}\left([0, 1]\right)} \leq K_{2},
		\end{equation}
		where $K_{2}$ depends on $K_{1}$.  
		Define the mapping $\mathcal{F}: X_{T} \to C\left(D_{1}\right) \times C([0, 1])$ by  
		\[
		\mathcal{F}(U, g) = (\bar{U}, \bar{g}).
		\]
		Set $\tilde{\mathcal{F}}(U, g) := \left.\mathcal{F}(U, g)\right|_{X_{T}}$. Then $(U, g)$ is a fixed point of $\tilde{\mathcal{F}}$ if and only if it solves \eqref{2.2}, which is equivalent to \eqref{1.1} for $t \in [0, T]$.
		
		\textbf{Step 2.} We show that $\widetilde{\mathcal{F}}$ is a contraction mapping for sufficiently small $T > 0$. 
		
		For $T <\min\left\{1,\, K_{1}^{-\frac{2}{1+\alpha}},\, K_{2}^{-\frac{2}{\alpha}}\right\}$, we have
		\[
		\begin{aligned}
			&\|\overline{U} - u_{0}\|_{C(D_T)} \leq T^{\frac{1+\alpha}{2}} \|\overline{U}\|_{C^{0,\frac{1+\alpha}{2}}(D_T)} \leq T^{\frac{1+\alpha}{2}} \|\overline{U}\|_{C^{0,\frac{1+\alpha}{2}}(D_1)} \leq K_1 T^{\frac{1+\alpha}{2}} \leq 1, \\
			&\|\overline{g}'(t) - g^{0}\|_{L^\infty([0,T])} \leq T^{\frac{\alpha}{2}} \|\overline{g}'\|_{C^{\frac{\alpha}{2}}([0,T])} \leq T^{\frac{\alpha}{2}} \|\overline{g}'\|_{C^{\frac{\alpha}{2}}([0,1])} \leq K_2 T^{\frac{\alpha}{2}} \leq H,
		\end{aligned}
		\]
		which implies that $\widetilde{\mathcal{F}}$ maps $X_T$ into itself.
		
		Next, we prove that $\widetilde{\mathcal{F}}$ is a contraction on $X_T$ for sufficiently small $T > 0$. Let $(U_i, g_i) \in X_T$ for $i=1,2$, and set $W := \overline{U}_1 - \overline{U}_2$. Then $W$ satisfies
		\begin{equation}\label{2.8}
			\left\{
			\begin{array}{ll}
				W_t - d W_{yy} - g_1'(t) W_y = \Psi, & y > 0,\ 0 < t \leq 1, \\
				W(t, 0) = 0, & 0 < t \leq 1, \\
				W(0, y) = 0, & y \geq 0,
			\end{array}
			\right.
		\end{equation}
		where
		\[
		\Psi := \left(g_1'(t) - g_2'(t)\right) \overline{U}_{2,y} + f(U_1) - f(U_2).
		\]
		For any $p > 1$ and integer $m \geq 1$,
		\begin{equation}\label{2.11}
			\begin{aligned}
				\|\Psi\|_{L^p([0,1] \times [m, m+1])} 
				&\leq \|g_1' - g_2'\|_{L^\infty([0,1])} \|\overline{U}_{2,y}\|_{L^p([0,1] \times [m, m+1])} \\
				&\quad + \|f(U_1) - f(U_2)\|_{L^p([0,1] \times [m, m+1])} \\
				&\leq S \left( \|U_1 - U_2\|_{C([0,1] \times [m, m+1])} + \|g_1' - g_2'\|_{L^\infty([0,1])} \right),
			\end{aligned}
		\end{equation}
		where the constant $S$ depends on the domain $D_1$, $K_1$, and the Lipschitz constant of $f$.
		
		Applying $L^p$ estimates to \eqref{2.8} and using Sobolev embedding, we obtain
		\begin{equation}\label{2.12}
			\begin{aligned}
				\|W\|_{C^{\frac{1+\alpha}{2},1+\alpha}([0,1] \times [m, m+1])} 
				&\leq C_1 \|W\|_{W_p^{1,2}([0,1] \times [m, m+1])} \\
				&\leq K_3 \left( \|U_1 - U_2\|_{C([0,1] \times [m, m+1])} + \|g_1' - g_2'\|_{L^\infty([0,1])} \right),
			\end{aligned}
		\end{equation}
		where $K_3$ depends on $p$, $D_1$, $S$, and $C_1$.
		From the definition of $\overline{g}_i(t)$, we have
		\begin{equation}\label{2.13}
			\left\|\overline{g}_1' - \overline{g}_2'\right\|_{C^{\frac{\alpha}{2}}([0,1])} \leq \frac{d}{\delta } \left\| \overline{U}_{1,y} - \overline{U}_{2,y} \right\|_{C^{0,\frac{\alpha}{2}}([0,1] \times [0,1])}.
		\end{equation}
		Combining \eqref{2.12} and \eqref{2.13} yields
		\begin{align*}
			&\left\|\overline{U}_1 - \overline{U}_2\right\|_{C^{\frac{1+\alpha}{2},1+\alpha}(D_1)} + \left\|\overline{g}_1' - \overline{g}_2'\right\|_{C^{\frac{\alpha}{2}}([0,1])} \\
			&\quad \leq K_4 \left( \|U_1 - U_2\|_{C(D_1)} + \|g_1' - g_2'\|_{L^\infty([0,1])} \right),
		\end{align*}
		where $K_4$ depends on $d$, $\delta$, $g_0$, and $K_3$.
		If we take  $T=\min \left\{\frac{1}{2},\, K_{1}^{\frac{-2}{1+\alpha}},\, K_{2}^{\frac{-2}{\alpha}}, \,K_{4}^{\frac{-2}{\alpha}}\right\}$, then we have
	\[ \begin{aligned}
		& \left\Arrowvert \bar{U}_{1} - \bar{U}_{2} \right\Arrowvert_{C(D_T)} + \left\Arrowvert \bar{g}_{1}^{\prime} - \bar{g}_{2}^{\prime} \right\Arrowvert_{C([0, T])} \\
		\leq & \,T^{\frac{1+\alpha}{2}} \left\Arrowvert \bar{U}_{1} - \bar{U}_{2} \right\Arrowvert_{C^{\frac{1+\alpha}{2}, 1+\alpha}(D_1)}
		+ T^{\frac{\alpha}{2}} \left\Arrowvert \bar{g}_{1}^{\prime} - \bar{g}_{2}^{\prime} \right\Arrowvert_{C^{\frac{\alpha}{2}}([0, 1])} \\
		\leq & \,\frac{1}{2} \left( \left\Arrowvert U_{1} - U_{2} \right\Arrowvert_{C(D_1)} + \left\Arrowvert g_{1}^{\prime} - g_{2}^{\prime} \right\Arrowvert_{L^{\infty}([0, 1])} \right) \\
		= &\, \frac{1}{2} \left( \left\Arrowvert U_{1} - U_{2} \right\Arrowvert_{C(D_T)} + \left\Arrowvert g_{1}^{\prime} - g_{2}^{\prime} \right\Arrowvert_{L^{\infty}([0, T])} \right),
	\end{aligned}\]
		which shows that $ \tilde{\mathcal{F}} $ is a contraction mapping on $ X_{T}$. Therefore, it has a unique fixed point  $(U, g) $ in $ X_{T}$. The maximum principle implies that $ U>0$  in $ [0, T]$ $\times\left[0, \infty\right)$.

		\textbf{Step 3}. We apply the Schauder theory to obtain additional regularity for the solution of \eqref{4.1} in $ \left[0, \infty\right) \times(0, T] $.
		
		From Step 2 we know that  $U \in C^{\frac{1+\alpha}{2}, 1+\alpha}\left(D_{T}\right)$ and $ g \in C^{1+\frac{\alpha}{2}}([0, T]) $. 
		Since $ u_{0} \in C^{2}\left(\left[g_{0}, \infty\right)\right)$, we cannot directly apply Schauder theory to \eqref{4.1}. To address this, we employ a standard technique involving a cutoff function. For any given small constant  $\varepsilon>0$, we choose  $\xi^{*} \in C^{\infty}([0, T]) $ such that
			\[\xi^{*}(t)=\left\{\begin{array}{l}
			1, \text { for } t \in[2 \varepsilon, T], \\
			0, \text { for } t \in[0, \varepsilon] .
		\end{array}\right.\]
		We then define $ V:=U \xi^{*}$. Substituting into \eqref{4.1}, we obtain the following modified system
		\begin{equation}\label{2.15}
			\left\{\begin{array}{ll}
				V_{t}=d V_{y y}+g'V_{y}+F(t, y), & 0<t \leq T,\,y>0, \\
				V\left(t, 0\right)=\delta \xi^{*}(t),\,&0<t \leq T,\\
				V(0, y)=0,\, &  y \geq 0,
			\end{array}\right.
		\end{equation}
		where
		\[F(t, y)=U \xi_{t}^{*}-f(U) \xi^{*}.\]
		Since  $F \in C^{\frac{\alpha}{2}, \alpha}\left([0, T] \times\left[0,\infty\right)\right)$ and $	g^{\prime}(t) \in C^{\frac{\alpha}{2}, \alpha}\left(D_{T}\right)$, we can apply the Schauder estimate (see Theorem 5.14 in \cite{GML}) to \eqref{2.15} to deduce
		\[\begin{aligned}
			\Arrowvert U\Arrowvert _{C^{1+\frac{\alpha}{2} 2+\alpha}\left([2 \varepsilon, T] \times\left[ 0, \infty\right)\right)} & \leq\Arrowvert V\Arrowvert _{C^{1+\frac{\alpha}{2}, 2+\alpha}\left([0, T] \times\left[ 0, \infty\right)\right)} \\
			& \leq M\Arrowvert F\Arrowvert _{C^{\frac{\alpha}{2}, \alpha}\left([0, T] \times\left[ 0, \infty\right)\right)},
		\end{aligned}\]
		where $ M$  depends on $ \varepsilon$, $ g_{0}$, and $\alpha$.
		Since  $\varepsilon $ can be arbitrarily small, it follows that
		$g \in C^{1+\frac{1+\alpha}{2}}((0, T])$, $u \in C^{1+\frac{\alpha}{2}, 2+\alpha}\left(\Omega_{T}\right)$.
		Thus,  $(u, g)$  is a classical solution of \eqref{1.1} over $ \Omega_{T}$.	
	\end{proof}

	\subsection{Global existence}

	We follow the approach of \cite{Du2024, DLNS} to prove the global existence and uniqueness result.

	\begin{lemma}[\underline{A priori bounds}]\label{le2.7}
		Suppose that $\left(\mathbf{f_m}\right)$ holds, $(u,g)$ is a solution to \eqref{1.1} defined for $t\in [0,T]$ for some $T\in (0,\infty)$. Then there exist constants $C_{1}, C_{2} >0$ independent of $T$ such that
		\begin{align*}
			0 < u(t,x) &\leq C_1 \quad \text{for } t \in [0, T] \text{ and } x \in [g(t), \infty), \\
			|g'(t)|  &\leq C_2 \quad \text{for } t \in [0, T].
		\end{align*}
	\end{lemma}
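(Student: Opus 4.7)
My plan is to prove the three inequalities by combining standard comparison arguments with a local barrier construction at the free boundary, following the strategy of \cite{Du2024,DLNS}.

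The upper bound on $u$ is obtained from a constant supersolution. Set $C_1:=\max\{\delta,\|u_0\|_{L^\infty([g_0,\infty))}\}$. Since $\delta>1$ and $C_1\geq\delta$, condition $(\mathbf{f_m})$ gives $f(C_1)\leq 0$, so $\bar u\equiv C_1$ is a stationary supersolution of $u_t=du_{xx}+f(u)$. The parabolic comparison principle on the moving domain $\Omega_T$, using $u(0,\cdot)\leq C_1$, $u(t,g(t))=\delta\leq C_1$, and the boundedness of $u$ provided by Theorem~\ref{th2.1}, then yields $u\leq C_1$ throughout $\Omega_T$, with $C_1$ depending only on $u_0$ and $\delta$ and in particular independent of $T$. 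The positivity $u>0$ is immediate from the strong maximum principle applied to $u$, using $f(0)=0$, $u_0>0$, and $u(t,g(t))=\delta>0$.

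The bound $|g'(t)|\leq C_2$ is the main task. My plan is to construct two local barriers at the free boundary that match $u(t,g(t))=\delta$. Since the expected sign $g'(t)\geq 0$ (retreating front) is not immediately available, the barriers must be designed so that the super/subsolution inequalities do not acquire a problematic sign from the moving boundary. Adapting the constructions of \cite{Du2024,DLNS} for the bounded-domain analogs, I would work on a thin strip $\{(t,x):0\leq t\leq T,\ g(t)\leq x\leq g(t)+L\}$ with $L$ and an auxiliary parameter $\mu$ depending on $d,\delta,C_1,\|f\|_{L^\infty([0,C_1])}$. A supersolution $\bar w$ interpolates between $\delta$ at $x=g(t)$ and $C_1$ at $x=g(t)+L$, and a subsolution $\underline w$ interpolates between $\delta$ and $0$; both share the boundary value $\delta$ with $u$, and at the outer edge of the strip $\bar w=C_1\geq u$ while $\underline w=0\leq u$. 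Parabolic comparison gives $\underline w\leq u\leq\bar w$ on the strip, and Hopf's lemma at the common boundary value then yields $\underline w_x(t,g(t))\leq u_x(t,g(t))\leq\bar w_x(t,g(t))$. Through the free boundary relation $g'(t)=-\frac{d}{\delta}u_x(t,g(t))$, this translates into $|g'(t)|\leq C_2$ with $C_2$ independent of $T$.

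The main obstacle I anticipate is the verification that $\bar w$ and $\underline w$ are genuine super/subsolutions: computing $\bar w_t-d\bar w_{xx}-f(\bar w)$ for any profile that moves with the boundary produces a term proportional to $g'(t)$ that must be controlled. Following \cite{Du2024,DLNS}, this will be handled by (i) choosing the barrier shape so that the $g'(t)$-term carries a favourable sign in the relevant regime, exploiting that $\delta>1$ and $f\leq 0$ on $[1,C_1]$ to ensure $-f(\bar w)\geq 0$ there, and (ii) a continuity/bootstrap argument on $[0,T]$ starting from the explicit initial value $|g'(0)|=\frac{d}{\delta}|u_0'(g_0)|$ and exploiting the continuity $g'\in C^{\alpha/2}([0,T])$ inherited from Theorem~\ref{th2.1}. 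A refined choice of barrier shape---for instance splitting the strip at the level sets $\{\bar w=1\}$ or $\{\underline w=1\}$ where the sign of $f$ changes---may be needed to close the argument uniformly in the data. Should the direct barrier approach prove too delicate, an alternative would be to apply iterated $L^p$ and Schauder estimates on the straightened problem, together with the local existence interval from Theorem~\ref{th2.1} and a standard extension criterion, to reach the same conclusion.
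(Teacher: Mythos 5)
Your overall route is the same as the paper's: a spatially constant (or ODE) supersolution plus the strong maximum principle for $0<u\le C_1$, and one-sided barriers pinned to the value $\delta$ at $x=g(t)$ on a thin strip, combined with comparison, the Hopf lemma and the relation $g'(t)=-\frac{d}{\delta}u_x(t,g(t))$, for the bound on $g'$. The genuine gap is in how you propose to neutralise the $g'(t)$-term in the barrier inequalities. For any profile $\phi(x-g(t))$ one has $\partial_t\,\phi(x-g(t))=-\phi'(x-g(t))\,g'(t)$, and in the relevant (retreating) regime $g'(t)>0$ this term has the \emph{unfavourable} sign for \emph{both} barriers: for a decreasing subsolution profile it adds the positive quantity $|\phi'|\,g'$ to the left of $\underline w_t\le d\underline w_{xx}+f(\underline w)$, and for an increasing supersolution profile it subtracts $\phi'g'>0$ from the left of $\bar w_t\ge d\bar w_{xx}+f(\bar w)$. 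So your item (i) --- choosing the shape so the term ``carries a favourable sign'' --- cannot work; a quantitative bound on $g'$ is needed to verify either barrier, which is precisely what is being proved. Your item (ii) (continuity of $g'$ from $g'(0)$) does not close the circle either: if $T_1$ is the first time $g'$ reaches the target bound, continuity only yields equality at $T_1$ and the continuation stalls there.

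The paper breaks the circularity in a specific order and with a strictness device that your sketch omits. First comes the \emph{sub}solution, which yields the \emph{upper} bound on $g'$ (touching from below raises $u_x(t,g(t))$, hence lowers $g'$): it is the convex exponential profile $\underline u=\delta\bigl(ce^{-(x-g(t))m/c}-c\bigr)+\delta$ on $[g(t),g(t)+k/m]$, normalised so that $-\frac d\delta\underline u_x(t,g(t))=dm$ is exactly the bound being sought, and its large second derivative makes it a \emph{strict} subsolution under the provisional hypothesis $g'\le dm$. Setting $T_1:=\sup\{t:\ g'<dm\ \text{on}\ [0,t)\}$, comparison holds up to $T_1$, and at $t=T_1$ the Hopf lemma applied to $u-\underline u$ (a strict supersolution of a linear equation vanishing at $x=g(T_1)$) gives the \emph{strict} inequality $g'(T_1)<dm$, contradicting the definition of $T_1$; hence $T_1=T$. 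Only after this is the supersolution (a quadratic rising from $\delta$ to $2C_1$) verified, using the now-available bound $g'\le dm$ to absorb its $g'$-term, which then produces the lower bound on $g'$. Your fallback via iterated $L^p$/Schauder estimates and the local existence time cannot substitute for this, since without such an a priori bound the constants in that iteration degrade with $T$.
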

	
	\begin{proof} We divide the proof into several steps.
	
		{\bf Step 1.} {We obtain the desired bound for $u$.}
		
		By the standard theory of parabolic equations, the following initial-boundary value problem in half line
		\[\left\{\begin{array}{ll}
			v_{t}-d v_{x x}=f(v), & t \in(0, T],\, x \in(g(t), \infty), \\
			v(t, g(t))=\delta, & t \in(0, T], \\
			v(0, x)=\inf_{x \geq g_0}u_0(x), & x \in\left[g_0, \infty\right),
		\end{array}\right.\]
		admits a unique solution  $v(t, x)$, which satisfies  $v(t, x)>0 $ for $ t \in(0, T]$ and $x \in[g(t), \infty) $. Applying the comparison principle, we deduce that $ u(t, x) \geq v(t, x)>0$  for  all $ t \in(0, T]$ and $x \in[g(t), \infty) $. This establishes the positivity of $u(t,x)$.

		Next, define  $C_{1}:= \|u_{0}\|_{L^\infty([g_0,\infty))}+1 $. Let  $w(t)$  be the unique solution of the ODE problem
		\[w^{\prime}=f(w),\, w\left(0\right)=C_{1} .\]
		From assumption $\left(\mathbf{f}\right)$,  it follows that $ w(t) \rightarrow 1 $ as $ t \rightarrow \infty $ and  $w(t)\leq m^{*} $ for all $ t\geq 0$. By the comparison principle, we have $u(t, x) \leq w(t) $ for  $t \in\left[0, T\right]$  and $  x \in[g(t), \infty)$. Thus, it is clear that
		\[0<u(t, x) \leq C_{1}\,\text { for }\, t \in [0, T] \text{ and } g(t)\leq x<\infty .\]
		{\bf Step 2.} {We construct a lower solution to obtain a lower bound for $g'(t)$.}
		
		For some constants $0<c<1$, $k>0$ and $m>0$ to be determined later, define 
		\begin{align*}\begin{cases} 
		\theta(s):=ce^{-s/c}-c \mbox{ for } s\geq 0, \\
			\underline u(t,x):=\delta\theta((x-g(t))m)+\delta \ \mbox{ for }\ t\geq 0,\,\, x\in [g(t),g(t)+k/m].
			\end{cases}
		\end{align*}
		 We will show that the following  hold for suitable choices of $c,\ k$ and $m$:
		\begin{equation}\label{1}
			\begin{cases}
				\underline u_t< d\underline u_{xx}+f(\underline u),\ \ &t\in (0,T],\, x\in [g(t),g(t)+k/m],\\
				\underline u(t,g(t)+k/m)< u(t,g(t)+k/m),\ \underline u(t, g(t))=\delta,\ & t\in [0,T],\\
				\underline u(0,x)\leq  u_0(x),\ &x\in [g_0, g_0+k/m],
			\end{cases}
		\end{equation}
		and 
		\begin{align}\label{2}
			g'(t)\leq -\frac{d}{\delta} \underline u_x(t, g)=dm,\ \ t\in [0,T].
		\end{align}
		
		Denote $c_0:=\min\{1/2, \inf_{x \geq g_0}u_0(x)\}$, 	choose $c\in (0,1)$  close to $1$, and 
		\begin{align*}
			k:=-c\ln \lf(\frac{c_0}{c\delta}+1-\frac{1}{c}\rr)>0.
		\end{align*} 
		Then, $\underline u(t,g(t)+k/m)=c_0$. 
		Consequently, we have
		\begin{align*}
			\underline u(t,g(t)+k/m)=c_0\leq u(t,x)\ \ {\rm for\ all}\ t\geq 0,\ x\in [g(t),\infty).
		\end{align*}
		By the definition of $\underline u$, we have $\underline u(t, g(t))=\delta$, and so the conditions in the second line of \eqref{1} are satisfied. 
		
		It is clear that  $\underline u_x(0, x)\leq -\delta m e^{-k/c}<-\max_{x\in [g_{0},g_{0}+1 ]}|u_0'(x)|$ for $x\in[g_{0}, g_{0}+k/m]$ provided  $m$ is chosen sufficiently large. 
				Since $\underline u(0,g_0)=u_0(g_0)=\delta$,  it follows that,
		for such $m$,  
		\begin{align*}
			\underline u(0,x)<u_0(x)\ \mbox{ for } \ x\in (g_0,g_0+k/m],
		\end{align*}
		which establishes the inequality in the third line of \eqref{1}.

		Next we prove the first inequality of \eqref{1} for $t\in [0, T_{1})$, where $T_{1}\leq T$ is  defined by
		\begin{align*}
			T_1=\sup\left\{t\in (0, T]: g'(s)<dm=-\frac{d}{\delta} \underline u_x(s, g(s))\ {\rm for\ all}\ s\in [0,t)\right\}.
		\end{align*}
		It is easily seen that for large $m>0$
		\begin{align*}
			g'(0)<dm= -\frac{d}{\delta} \underline u_x(0, g_{0}),
		\end{align*}
		and so $T_{1}$ is well defined and
		\begin{equation}\label{T1}
		g'(t)\leq dm \mbox{ for } t\in (0, T_1].
		\end{equation}
		 (In Step 4, we will  show  $T_{1}$ equals to $T$.)
		
		Clearly $0<\underline u(t,x)\leq\delta$, and by direct computation,
		\begin{align}\label{underline-u}
			\underline u_t=\delta m e^{-(x-g(t))m/c} g'(t),\ \ \underline u_x=-\delta m e^{-(x-g(t))m/c},\ \  \underline u_{xx}=\frac{\delta m^2}{c} e^{-(x-g(t))m/c}.
		\end{align}
		Using \eqref{T1} and \eqref{underline-u} we obtain
		\begin{align*}
			\underline u_t\leq d\delta m^2 e^{-(x-g(t))m/c} \mbox{ for } t\in (0, T_1], \ x\in [g(t), g(t)+k/m].
		\end{align*}
		Thus, if 
		\[ d\delta m^2 e^{-(x-g(t))m/c} <d \underline u_{xx}-K=d \frac{\delta m^2}{c} e^{-(x-g(t))m/c}-K,  \mbox{ with } K:=\max_{s\in [0,\delta]} |f(s)|,\]
		then the first inequality in  \eqref{1} holds with $T$ replaced by $T_1$. This inequality is satisfied provided
		\begin{align}\label{m}
			m>\sqrt{\frac{Kce^{k/c}}{d\delta(1-c)}}. 
		\end{align}
		Thus \eqref{1} and \eqref{2} hold for $t\in  (0, T]$ when the parameters are chosen as above, except that the first inequality in \eqref{1} and \eqref{2} are proved only for $t\in (0, T_1]$.
		We have to wait till Step 4 to show $T_1=T$, thus fully completing Step 2.
		
		{\bf Step 3.} We prove that \eqref{T1} implies
		\begin{align}\label{2.26}
				 g'(t) \geq -2M(2C_{1}-\delta)d/\delta \ \,\text{for} \,\, t\in [0,T_{1}] \mbox{ and some $M\gg 1$ independent of $T_1$.}
		\end{align}
		
	 Since  $f $ is $ C^{1}$  and $ f(0)=0 $, there exists a constant $L>0$, depending on  $C_{1}$,  such that $ f^{\prime}(u) \leq L$  for  $u \in\left[0,\,2 C_{1}\right] $. 
	 For some $ M>1 $ to be specified later, define
		\[\begin{aligned}
			&\Omega:=\left\{(t, x): 0\leq t\leq T_{1},\, g(t)<x<g(t)+M^{-1}\right\}, \\
			&\bar{u}(t, x):=\left(2 C_{1}-\delta\right)\left[2 M(x-g(t))-M^{2}(x-g(t))^{2}\right]+\delta.
		\end{aligned}\]
		Clearly,
		\[\begin{aligned}
			&\bar{u}(t, x)=\left(2 C_{1}-\delta\right)\left(1-[1-M(x-g(t))]^{2}\right)+\delta \leq 2 C_{1} \text { for }(t, x) \in \Omega ,\\
			&\bar{u}(t, g(t))=\delta, \,\bar{u}\left(t, g(t)+M^{-1}\right)=2 C_{1}>u\left(t, g(t)+M^{-1}\right) .
		\end{aligned}\]
		Moreover, for $ (t, x) \in \Omega$, using $g
		'(t)\leq md$ for $t\in[0,T_{1})$ we obtain
		\[
		\begin{aligned}
			\bar{u}_{t}-d \bar{u}_{x x}-f(\bar{u}) & =\left(2 C_{1}-\delta\right) g^{\prime}(t)\left[-2 M-2 M^{2}(g(t)-x)\right]+d\left(2 C_{1}-\delta\right) 2 M^{2}-f(\bar{u}) \\
			&\geq -2Mdm(2C_{1}-\delta)[1+M(g(t)-x)]+d(2C_{1}-\delta)2M^{2}-2LC_{1}\\
			&\geq -2Mdm(2C_{1}-\delta)+d(2C_{1}-\delta)2M^{2}-2LC_{1}\\
			& = 2 d M[M-m]\left(2 C_{1}-\delta\right)-2 L C_{1} \geq 0,
		\end{aligned}\]
		 provided that $M$ is chosen to satisfy
		\[M \geq\sqrt{\frac{L C_{1}}{d\left(2 C_{1}-\delta\right)}}+m .\]
		
		We next show that
		\[
		\bar{u}\left(0, x\right) \geq u_{0}(x)\text { for } x \in\left[g_{0},\, g_{0}+M^{-1}\right],
		\]
		provided $M$ is sufficiently large. Indeed, by choosing $M$ large enough such that  $M(2C_{1}-\delta)>\max_{x \in\left[g_{0}, g_0+1 \right]}|u'_{0}(x)|$, we have
		\[
		\bar{u}_{x}(0, x)\geq M\left(2 C_{1}-\delta\right)> u'_{0}(x) \,\,\text{for} \,\,x \in[g_{0},\,g_{0}+(2M)^{-1}].
		\]
		This and $\bar{u}\left(0, g_{0}\right) = u_{0}(g_{0})=\delta$ imply that for sufficiently large $M>0$, 
		\[
		\bar{u}\left(0, x\right) \geq u_{0}(x)\text { for } x \in\left[g_{0},\,g_{0}+(2M)^{-1}\right].
		\]
		By direct calculation, we find that $\bar{u}_{x}(0,x)\geq 0$ for $x\in [g_{0},\, g_{0}+(M)^{-1}]$. 
		Hence, 
		 \[
		 \bar{u}(0,x)\geq\bar{u}(0,\,g_{0}+(2M)^{-1})=\frac 34(2C_{1}-\delta)+\delta>C_{1}\geq u_{0}(x) \,\,\text{for}\,\,x \in[g_{0}+(2M)^{-1}, \,g_{0}+(M)^{-1}].
		 \]
		Combining the above results, we conclude that
		\[\bar{u}\left(0, x\right) \geq u_{0}(x)\text { for } x \in\left[g_{0},\, g_{0}+(M)^{-1}\right].\]
		Therefore, we can apply the standard comparison principle over the region $\Omega$
		to deduce that $u(t, x) \leq \bar{u}(t, x) $ in $\Omega$. Since $ u(t, g(t))=\bar{u}(t, g(t))=\delta $, it follows that
		\[u_{x}(t, g(t)) \leq \bar{u}_{x}(t, g(t))=2 M\left(2 C_{1}-\delta\right) \text { for } t \in\left[0, T_{1}\right).\]
		Consequently,
		\[ g^{\prime}(t)=-\frac{d}{\delta} u_{x}(t, g(t)) \geq -2 M\left(2 C_{1}-\delta\right) \frac{d}{\delta} \text { for } t \in\left(0, T_{1}\right],\]
		where  $M$ is chosen to be sufficiently large and independent of $T_{1}$.

	{\bf Step 4. } We show that for sufficiently large  $m$,  the inequality \eqref{2} holds for $t\in [0,T]$, and consequently, \eqref{2.26} and the first inequality in \eqref{1} are valid for all $t\in[0,T]$, thereby completing the proof of the lemma.

		We know that either $T_1=T$ or $T_1<T$, and by the definition of $T_1$, the latter case implies
		\begin{align}\label{2.27}
			g'(T_1)=dm=-\frac{d}{\delta} \underline u_x(T_1, g(T_1)).
		\end{align}
		Denote $w:=u-\underline u$. Then $w$ satisfies 
		\begin{equation*}
			\begin{cases}
				w_t> dw_{xx}+f(u)-f(\underline u)= dw_{xx}+c w,\ \ &t\in (0,T_1],\, x\in [g(t),g(t)+k/m],\\
				w(t,g(t))=0,\ w(t, g(t)+k/m)>0,\ & t\in [0,T_1],\\
				w(0,x)\geq  0,\ &x\in [g_0,g_0+k/m],
			\end{cases}
		\end{equation*}
		for some $c\in L^\infty$. By the comparison principle, 
		\begin{align*}
			w(t,x)>0\ \mbox{ for } \ t\in (0,T_1],\, x\in (g(t),g(t)+k/m].
		\end{align*} 
		Since $w(T_1, g(T_1))=0$, it follows from  the Hopf lemma that $w_x(T_1,g(T_1))>0$, and therefore
		\begin{align*}
			g'(T_1)=-\frac{d}{\delta}  u_x(T_1, g(T_1))<-\frac{d}{\delta}  \underline u_x(T_1, g(T_1))=dm,
		\end{align*}
		which contradicts  \eqref{2.27}. Therefore $T_1=T$
		and the proof of the lemma is completed.
	\end{proof}
	
	\begin{proof}[Proof of Theorem \ref{th1.1}]
		By Theorem \ref{th2.1}, we know that \eqref{1.1} has a unique solution \((u, g)\) defined on some maximal time interval \((0, T_{\max})\), and
		\[
		g \in C^{1+\frac{1+\alpha}{2}}\big((0, T_{\max})\big), \quad u \in C^{1+\frac{\alpha}{2}, 2+\alpha}(\Omega_{T_{\max}})
		\]
		with
		\[
		\Omega_{T_{\max}} := \big\{ (t, x) : t \in (0, T_{\max}), \, x \in [g(t), \infty) \big\}.
		\]
		Arguing indirectly, we assume \( T_{\max} < \infty \). 
		Hence, by Lemma \ref{le2.7}, we conclude that there exist constants \( C_1, C_2 > 0 \) such that for \( t \in [0, T_{\max}) \) and \( x \in [g(t),\infty) \),
		\begin{equation*}
			0 \leq u(x, t) \leq C_1, \quad |g'(t)| \leq C_2, \quad |g(t)| \leq C_2 t + |g_0|.
		\end{equation*}
		For any small constant \(\varepsilon > 0\), it follows from the proof of Theorem \ref{th2.1} that \( u \in C^{\frac{1+\alpha}{2}, 1+\alpha}(\overline{\Omega}_{T_{\max}-\varepsilon}) \). Therefore, as in  the proof of Theorem \ref{th2.1}, Schauder's estimates imply that for any fixed \( 0 < T < T_{\max} - \varepsilon \), we have
		\[
		\| u \|_{C^{1+\frac{\alpha}{2}, 2+\alpha}(\Omega_{T_{\max}-\varepsilon} \setminus \Omega_T)} \leq M,
		\]
		where \( M \) depends on \( T \), \( T_{\max}\), $C_1$ and $C_2$, but is independent of \( \varepsilon \). Since \( \varepsilon > 0 \) can be arbitrarily small, it follows that for any \( t \in [T, T_{\max}) \),
		\begin{equation*}
			\| u(t, \cdot) \|_{C^{2+\alpha}([g(t), \infty))} \leq M.
		\end{equation*}
		Now, repeating the proof of Theorem \ref{th2.1}, we conclude that there exists \( \varepsilon_0 > 0 \), depending only on \( M \) and \( C_1 \), $C_2$, such that \eqref{1.1} with initial time \( T_{\max} - \frac{\varepsilon_0}{2} \) has a unique solution \((u, g)\) defined for \( t \) up to \( T_{\max} + \frac{\varepsilon_0}{2} \). This contradicts the maximality of \( T_{\max} \). Thus, \( T_{\max} = \infty \).
	\end{proof}

	\section{Proof of Theorem \ref{th1.2}}
		
	\subsection{Behaviour of $g(t)$ and $u(t,x)$ for $t\gg 1$ and $x-g(t)\gg 1$}
	\begin{lemma}\label{u}
		
		Suppose $(\mathbf{f_m})$ holds, and  $(u, g)$ is the solution of \eqref{1.1} with  $u_0 \in \mathcal{X}(g_0)$. Then the following conclusions hold.
		\begin{enumerate}
			
			\item[(i)] There exists $T^0 > 0$ such that
			\[
			g'(t) > 0, \,	u(t,x) < \delta \quad \text{for } t > T^0, \, x \in (g(t), \infty),\]
			and  $\lim_{t \to \infty} g(t)=\infty$.
			\item[(ii)] For any  $\varepsilon > 0$, there exists $T_0(\varepsilon) > 0$, $X_0(\varepsilon)>0$ such that
			\begin{align}\label{3.1a}
				1+\frac{\varepsilon}{2}\geq u(t,x) \geq 1 - \frac{\varepsilon}{2} \quad\text{ for } t\geq T_0(\varepsilon) \text{ and } x\geq g(t)+X_0(\varepsilon).
			\end{align}
		\end{enumerate}
	\end{lemma}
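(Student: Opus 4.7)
For part (i), I would proceed by a cascade of bounds. Starting from the $L^\infty$-bound $u\le C_1:=\|u_0\|_\infty+1$ given by Lemma~\ref{le2.7}, introduce $w(t)$ solving $w'=f(w)$ with $w(0)=C_1$, which decreases monotonically to $1$ and meets $\delta$ at a unique time $t_1$; on $[0,t_1]$ the constant-in-$x$ function $w(t)$ is a supersolution of the moving-boundary problem with boundary value $\ge\delta$, so comparison yields $u(t_1,\cdot)\le\delta$, and for $t\ge t_1$ the constant $\delta$ itself is a strict supersolution (since $f(\delta)<0$), giving $u(t,x)\le\delta$ thereafter. Setting $v:=\delta-u\ge 0$ and linearizing via the mean value theorem, $v$ satisfies a linear parabolic equation with bounded coefficients and strictly positive forcing $-f(\delta)>0$, with $v=0$ on the moving boundary; the strong maximum principle then gives $v>0$ strictly in the interior, and Hopf's lemma at $x=g(t)$ yields $v_x(t,g(t))>0$, equivalently $u_x(t,g(t))<0$ and hence $g'(t)>0$ for $t>T^0:=t_1$. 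To show $g(t)\to\infty$ I would argue by contradiction: if $g(t)\nearrow g_\infty<\infty$ then $g'(t)\to 0$, and parabolic regularity together with a standard $\omega$-limit/energy argument produces a subsequential stationary limit $U_\infty$ on $[g_\infty,\infty)$ satisfying $dU_\infty''+f(U_\infty)=0$, $U_\infty(g_\infty)=\delta$, $U_\infty'(g_\infty)=0$, $U_\infty\le\delta$; but then $U_\infty''(g_\infty)=-f(\delta)/d>0$ forces $U_\infty(y)>\delta$ for $y$ slightly to the right of $g_\infty$, contradicting $U_\infty\le\delta$.

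For part (ii), the lower bound is immediate from a constant-in-$x$ subsolution: let $\underline v(t)$ solve $\underline v'=f(\underline v)$ with $\underline v(0)=\inf_{x\ge g_0}u_0(x)>0$. Whether the initial value lies below or above $1$, the orbit remains in $(0,\delta]$ and converges to $1$; since $\underline v(t)\le\delta=u(t,g(t))$ and $\underline v(0)\le u_0$, comparison gives $u(t,x)\ge\underline v(t)$ for all admissible $(t,x)$, yielding $u\ge 1-\varepsilon/2$ for $t\ge T_0(\varepsilon)$ and every $x\ge g(t)$.

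For the upper bound I would use a compactness/entire-solution argument. Suppose the bound fails: there exist $\varepsilon_0>0$ and sequences $t_n\to\infty$, $x_n$ with $x_n-g(t_n)\to\infty$ and $u(t_n,x_n)\ge 1+\varepsilon_0$. Because $|g'|\le C_2$, for any fixed $T,L>0$ the shifted functions $u_n(s,y):=u(t_n+s,\,x_n+y)$ are defined on $[-T,T]\times[-L,L]$ for all $n$ sufficiently large, are uniformly bounded by $C_1$, and enjoy uniform $C^{1+\alpha/2,2+\alpha}$ bounds by interior parabolic regularity. A diagonal subsequence then converges in $C^{1,2}_{\rm loc}(\mathbb R^2)$ to an entire solution $u_\infty$ of $u_s=d u_{yy}+f(u)$ with $u_\infty(0,0)\ge 1+\varepsilon_0$; part~(i) combined with the lower bound gives $1\le u_\infty\le\delta$. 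For each $s_0\in\mathbb R$, ODE comparison yields $u_\infty(s,y)\le w_{s_0}(s)$ for $s\ge s_0$, where $w_{s_0}$ solves $w'=f(w)$ with $w_{s_0}(s_0)=\delta$; exponential convergence of $w$ to the stable equilibrium $1$ forces $w_{s_0}(0)\to 1$ as $s_0\to-\infty$, hence $u_\infty(0,y)\le 1$ for all $y$, contradicting $u_\infty(0,0)\ge 1+\varepsilon_0$.

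The hard part will be the upper bound in (ii). A naive exponential supersolution $1+(\delta-1)e^{-\lambda(x-g(t))}$ has the correct boundary value $\delta$ at $x=g(t)$ but cannot be compared initially to $u(T_0,\cdot)$ without quantitative decay information on $u(T_0,x)$ as $x\to\infty$, which is not available at this stage. The compactness argument sketched above sidesteps this issue by allowing the initial time of the ODE supersolution to be pushed arbitrarily far into the past of the limiting Cauchy problem, where $\delta$ has unbounded time to decay to $1$.
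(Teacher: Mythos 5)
Your proposal is correct in outline, and for the hardest piece (the upper bound in (ii)) it takes a genuinely different, legitimate route from the paper. The first half of (i) (ODE supersolution $w$ reaching $\delta$, then $\delta$ as a strict supersolution, strong maximum principle and Hopf's lemma to get $g'>0$) and the lower bound in (ii) (the spatially constant subsolution $\underline v$) coincide with the paper's proof. For $g(t)\to\infty$ the paper also argues by contradiction, but it passes to a \emph{time-dependent} entire limit $\tilde V$ of the straightened problem satisfying $\tilde V(t,0)=\delta$, $\tilde V_y(t,0)=0$, and contradicts Hopf's lemma; your claim that an ``$\omega$-limit/energy argument'' yields a \emph{stationary} limit is an overstatement (a Lyapunov functional on the unbounded half-line is not standard here, and stationarity is not needed): your second-derivative contradiction transfers verbatim to the time-dependent limit, since $\tilde V(t,0)\equiv\delta$ forces $\tilde V_t(t,0)=0$ and hence $d\,\tilde V_{yy}(t,0)=-f(\delta)>0$. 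Also, $g'\to0$ does not follow merely from $g$ being bounded and increasing; you need the uniform (H\"older) continuity of $g'$ from boundary parabolic estimates together with $\int_0^\infty g'\,dt<\infty$, exactly as the paper arranges. For the upper bound in (ii), the paper is constructive: it compares $u$ with solutions $U_M(t,\cdot;L)$ of Dirichlet problems on $[L,L+M]$ with data $\delta$, which decrease in $t$ to stationary profiles $v_M$ converging to $v^*$ with $v^*(L+z_0)<1+\varepsilon/8$ for $z_0$ independent of $L$, and then slides $L$ along the eventually monotone front. Your compactness argument --- extract an entire solution $u_\infty$ of $u_s=du_{yy}+f(u)$ at points receding from the front, dominate it by the ODE orbit started at $\delta$ at time $s_0$, and let $s_0\to-\infty$ --- is shorter and avoids the uniform-in-$L$ bookkeeping; its cost is being purely qualitative (no explicit $T_0$, $X_0$), which is harmless since Theorem 1.2 only needs existence. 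Two details to make explicit: since $|g'|\le C_2$ and $x_n-g(t_n)\to\infty$, the rectangles around $(t_n,x_n)$ eventually lie at arbitrarily large distance from the free boundary, so interior estimates suffice and the limit really is entire; and the comparison $u_\infty\le w_{s_0}$ on all of $\mathbb{R}$ is justified because all functions involved are bounded.
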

	\begin{proof}
		
		Consider the following problems
		\begin{equation}\label{eq:ivp_v}
			\left\{
			\begin{aligned}
				v'(t) &= f(v), \quad t > 0, \\
				v(0) &= \inf_{x \geq g_0} u_0(x) \in (0, \delta],
			\end{aligned}
			\right.
		\end{equation}
		and
		\begin{equation}\label{eq:ivp_w}
			\left\{
			\begin{aligned}
				w'(t) &= f(v), \quad t > 0, \\
				w(0) &= \sup_{x \geq g_0} u_0(x)+1 > \delta.
			\end{aligned}
			\right.
		\end{equation}
		Since $f$ satisfies  $({\bf f_m})$, standard ODE theory implies $\lim_{t \to \infty} v(t) = \lim_{t \to \infty} w(t) = 1$. Consequently, there exists $T^0 > 0$ such that $w(T^0) = \delta$ and $w(t)>\delta$ for $t\in [0, T_0)$.
		Define $W(t,x) := w(t) - u(t,x)$. Then 
		\[
		\begin{cases} 
			W_t - d W_{xx} - cW = 0, & 0 < t \leq T^0, \quad x > g(t), \\ 
			W(t, g(t)) \geq  0, & 0 < t \leq T^0, \\ 
			W(0, x) \geq 0, & x \geq g_0, 
		\end{cases}
		\] 
		where $c(t,x) =  \dfrac{f(w(t)) - f(u(t,x))}{w(t) - u(t,x)} \cdot \mathbf{1}_{\{w(t)\neq u(t,x)\}}$ is bounded. Applying the maximum principle \cite[Theorem 2.7]{GML} yields $W(t,x) \geq 0$ for $x \in [g(t), \infty)$ and $t \in [0, T^0]$. In particular, $u(T^0, x) \leq w(T^0) = \delta$. We may then use the comparison principle to compare $\bar w\equiv \delta$ and the solution $u$ of \eqref{1.1} in the region $t \geq T^0$ and $x \in [g(t), \infty)$ to conclude that  $u(t,x) \leq \delta$ for $t \geq T^0$ and $x \in [g(t), \infty)$. The strong comparison principle further gives $u(t,x) < \delta$ for $t \geq T^0$ and $x \in (g(t), \infty)$.
		The Hopf boundary lemma \cite[Lemma 1.21]{WMX} then implies $ \partial_x u(t, g(t)) < 0$ for $t > T^0$. Combining this with the free boundary condition $g'(t) = -\frac{d}{\delta} \partial_x u(t, g(t))$ we obtain $g'(t) > 0$ for $t > T^0$.
		\medskip
		
		To complete the proof for (1), it remains to show
		 $g_\infty:=\lim_{t \to \infty}g(t)=\infty$. 
		 Otherwise $g_\infty<\infty$.
		We then define 
		\[
		y: = x-g(t), \quad V(t, y): =u(t, y+g(t)) \mbox{ for } t\geq 0,\ y\geq 0.
		\]
		Clearly $V$ satisfies 
		\begin{equation}\label{lz}
			\left\{
			\begin{array}{ll}
				V_{t} - g'(t) V_y - d V_{yy} = f(V), & t > 0,  \quad y > 0, \\
				V(t,0) = \delta, & t > 0, \\
				V_{y}(t, 0) = -\dfrac{\delta}{d} g'(t), & t > 0, \\
				V(0, y) = u_{0}(y + g_0), & y \geq 0.
			\end{array}
			\right.
		\end{equation}
		Lemma \ref{le2.7} implies that $|g'(t)|\leq C_2$, where $C_2$ is a constant independent of $t$.	Applying interior and boundary $L^p$ estimates to \eqref{lz}, we obtain that for any $p > 1$,
		\[
		\| V \|_{W_p^{1,2}([n, n+2] \times [L, L+1])} \leq C_p
		\]
		 for all integers $n \geq 0$ and all real numbers $L \geq 0$, where $C_p > 0$ is independent of $n$ and $L$. Taking $p$ sufficiently large and using the Sobolev embedding theorem, we deduce from the third equation in \eqref{lz} that $g'(t)$ is uniformly continuous for $t \geq 1$. This implies $g'(t) \to 0$ as $t \to \infty$.
		Let $\{t_n\}$ be an arbitrary sequence increasing to $\infty$ as $n \to \infty$, and define
		\[
		V_n(t, y) := V(t_n + t, y).
		\]
		Then,
		\begin{equation}\label{ln}
			\left\{
			\begin{array}{ll}
				(V_n)_{t} - g'(t+t_n) (V_n)_y - d (V_n)_{yy} = f(V_n), & t >-t_n, \quad  y > 0, \\
				V_n(t,0) = \delta, & t >-t_n, \\
				(V_n)_{y}(t, 0) = -\dfrac{\delta}{d} g'(t+t_n), & t >-t_n, \\
				V_n(-t_n, y) = u_{0}(y + g_0), & y \geq 0.
			\end{array}
			\right.
		\end{equation}
		Applying $L^p$ estimates to \eqref{ln}, using the Sobolev embedding theorem and a standard diagonal subsequence argument, we find that for some $\alpha \in (0,1)$, along a subsequence, $V_n \to \tilde{V}$ in $C_{\textup{loc}}^{(1+\alpha)/2,1+\alpha}(\mathbb{R} \times [0,\infty))$. The limit function $\tilde{V}$ satisfies
		\begin{equation}\label{lz2}
			\begin{cases}
				\tilde{V}_{t} - d \tilde{V}_{yy} = f(\tilde{V}), & t \in \mathbb{R},\quad y \in [0,\infty), \\
				\tilde{V}(t, 0) = \delta, & t \in \mathbb{R}, \\
				\tilde{V}_{y}(t, 0) = 0, & t \in \mathbb{R}.
			\end{cases}
		\end{equation}
		Since $\tilde{V}(t, y) \leq \delta$ for all $(t, y) \in \mathbb{R} \times [0,\infty)$ and   $\overline{V} \equiv \delta$ is a strict upper solution to \eqref{lz2}, the strong maximum principle yields
		\[
		\tilde{V}(t, y) <\delta \quad \text{for all } t \in \mathbb{R} \text{ and } y \in (0,\infty).
		\]
		Applying the Hopf boundary lemma at $y = 0$, we obtain
		\[
		\tilde{V}_y(t, 0) < 0 \quad \text{for all } t \in \mathbb{R},
		\]
		which contradicts the third equation $\tilde{V}_y(t, 0) = 0$ in \eqref{lz2}. This contradiction proves that $\lim_{t \to \infty}g(t)=\infty$. Part (i) of the lemma is now proved.

		Define $V(t,x) := u(t,x) - v(t)$. Then
		\begin{equation*}
			\begin{cases}
				V_t - d V_{xx} - \tilde{c}V = 0, & t > 0, \quad x > g(t), \\
				V(t, g(t)) > 0,                   & t > 0, \\
				V(0, x) \geq 0,                  & x \geq g_0,
			\end{cases}
		\end{equation*}
		where $\tilde{c}$ is bounded. Applying the maximum principle \cite[Theorem 2.7]{GML} gives $V(t,x) \geq 0$, i.e., $u(t,x) \geq v(t)$ for $t > 0$ and $x \in [g(t), \infty)$.
		Since $v(\infty)=1$, for any $\varepsilon > 0$, there exists $T(\varepsilon) > T^0$ large enough such that
		\[
		u(t,x) \geq v(t) > 1 - \frac{\varepsilon}{2} \quad \text{for } t > T(\varepsilon), \, x \in [g(t), \infty).
		\]
		
		Let $U := U_M(t,x;L)$ be the unique solution of the following problem
		\begin{equation}\label{L}
			\left\{
			\begin{array}{ll}
				U_{t} - d U_{xx} = f(U), & t > 0, \quad L < x < L + M, \\
				U(t, L) = U(t, L + M) = \delta, & t > 0, \\
				U(0, x) \equiv \delta, & L \leq x \leq L + M,
			\end{array}
			\right.
		\end{equation}
		where $M, L > 0$. Since $\underline{U} \equiv 1$ is a lower solution of \eqref{L} and $\overline{U} \equiv \delta$ is an upper solution of \eqref{L}, the comparison principle implies $1 \leq U(t, x) \leq \delta$ for $t \geq 0$ and $x \in [L, L + M]$. Moreover, $U(t, x)$ is nonincreasing in $t$ for $L \leq x \leq L + M$. It follows that
		\[
		v_{M}(x;L) := \lim_{t \rightarrow \infty} U_{M}(t, x;L) \text{ exists},
		\]
		and $1 \leq v_{M} \leq \delta$ for $x \in [L, L + M]$. Moreover, by standard parabolic regularity, the above limit holds in the $C^{2}([L, L + M])$ norm for $U_{M}(t, \cdot;L)$, and $v_{M}(x;L)$ satisfies
		\begin{equation}\label{stationary}
			-d v_{M}''(x) = f\left(v_{M}\right), \quad 1 \leq v_{M} \leq \delta \text{ in } (L, L + M), \quad v_{M}(L) = v_{M}(L + M) = \delta.
		\end{equation}

		{\bf Claim}. The function $v_M$ converges to $v^{*} := v^{*}(x;L)$ in $C_{\mathrm{loc}}^{2}([L, \infty))$  as $M\to \infty$ with $v^{*} $ satisfying
		\begin{equation}\label{v*}
			\left\{
			\begin{array}{ll}
				-d (v^{*})'' = f(v^{*}),\ (v^{*})' < 0\ \mbox{ for }  \ x > L, \\
				v^{*}(L) = \delta, \quad v^{*}(\infty) = 1. 
			\end{array}
			\right.
		\end{equation}
		
		For $M_{1} > M$, the function $U_{M_1}$ restricted to $[L, L + M]$ is a lower solution to \eqref{L}. It follows that $U_{M_1} \leq U_{M}$ for $t \geq 0$ and $x \in [L, L + M]$, implying $v_{M_1} \leq v_{M}$ over $[L, L + M]$. Therefore,
		\[
		v^{*}(x;L) := \lim_{M \rightarrow \infty} v_{M}(x;L) \quad \text{exists for every } x \geq L.
		\]
		Moreover, by standard elliptic regularity, the above limit holds in $C_{\mathrm{loc}}^{2}([L, \infty))$ and
		\[
		-d (v^{*})''(x;L) = f\left(v^{*}(x;L)\right),  \quad 1 \leq v^{*}(x;L) \leq \delta \quad \text{for } x \geq L, \ \quad v^{*}(L) = \delta.
		\]
		It then follows easily from elementary analysis that $v^{*} := v^{*}(x;L)$ satisfies \eqref{v*}.
		This completes the proof of the claim.

		From this claim, there exists \( z_0 > 0 \) independent of \( L \) such that \( v^{*}(L + x) < 1 + \frac{\varepsilon}{8} \) for \( x \geq z_0 \). Then, there exists \( M_0 > z_0 \) large enough such that \( v_{M_0}(L + z_0) < v^{*}(L + z_0) + \frac{\varepsilon}{8} < 1 + \frac{\varepsilon}{4} \). In addition, there exists \( \overline{T}(\varepsilon) > 0 \), independent of \( L \), such that
		\begin{align}\label{UM0}
			U_{M_0}(t, L + z_0; L) < 1 + \frac{\varepsilon}{2} \quad\mbox{for } t \geq \overline{T}(\varepsilon).
		\end{align}
		Now we are ready to give the upper bound of \( u \). Since \( g'(s)>0 \) for  \( s>T^0 \) and $g(\infty)=\infty$, we can enlarge \( T^0 \) such that \( g(t) = \max_{s \in [0, t]} g(s) \) for \( t \geq T^0 \).
		
		For any \( \widetilde{T} \geq \overline{T}(\varepsilon) \) and \( X_1 \geq g(\widetilde{T} + T^0) + z_0 \), we have \( L_{\widetilde T} := X_1 - z_0 \geq g(t) \) for \( t \in [T^0, \widetilde{T} + T^0] \). To simplify notations we write
		\begin{align*}
			U^L(t,x) = U_{M_0}(t,x;L) \quad\mbox{for } t \geq 0,\  x \in [L, L + M_0].
		\end{align*}
		Then \( U^{L_{\widetilde T} }(t,x) \) satisfies
		\begin{equation*}
			\left\{
			\begin{array}{ll}
				U^{L_{\widetilde T} }_{t} - d U^{L_{\widetilde T} }_{xx} - f(U^{L_{\widetilde T} }) = 0, & 0 < t \leq \widetilde{T},  L_{\widetilde T} < x < L_{\widetilde T} + M_0, \\
				U^{L_{\widetilde T} }(t, L_{\widetilde T}) = \delta > u(t + T^0, L_{\widetilde T}), & 0 < t \leq \widetilde{T}, \\
				U^{L_{\widetilde T} }(t, L_{\widetilde T} + M_0) = \delta > u(t + T^0, L_{\widetilde T} + M_0), & 0 < t \leq \widetilde{T}, \\
				U^{L_{\widetilde T} }(0, x) \equiv \delta \geq u(T^0, x), & L_{\widetilde T} \leq x \leq L_{\widetilde T} + M_0.
			\end{array}
			\right.
		\end{equation*}
		The comparison principle then gives 
		\[\mbox{ $u(t + T^0, x) \leq U^{L_{\widetilde T} }(t, x)$  for \( t \in [0, \widetilde{T}],\  x \in [L_{\widetilde T}, L_{\widetilde T} + M_0]\) .}
		\]
		In particular, \( u(\widetilde{T} + T^0, L_{\widetilde T} + z_0) \leq U^{L_{\widetilde T} }(\widetilde{T}, L_{\widetilde T} + z_0) \leq 1 + \frac{\varepsilon}{2} \) by \eqref{UM0}. 
		Noting that the value of \( L_{\widetilde{T}} + z_0 = X_1 \) can be chosen arbitrarily subject to \( X_1 \geq g(\widetilde{T} + T^0) + z_0 \), we obtain
		\begin{align*}
			u(\widetilde{T} + T^0, x)  \leq 1 + \frac{\varepsilon}{2} \quad\mbox{for } x \geq g(\widetilde{T} + T^0) + z_0.
		\end{align*}
		Then the arbitrariness of \( \widetilde{T} \geq \overline{T}(\varepsilon) \) implies
		\[
		u(t, x) \leq 1 + \frac{\varepsilon}{2} \quad \text{for } t \geq  \overline{T}(\varepsilon) + T^0 \text{ and } x \in [g(t) + z_0, \infty).
		\] 
		Hence the desired conclusion holds with \( X_0(\varepsilon) := z_0 \), \( T_0(\varepsilon) := \overline{T}(\varepsilon) + T^0 \).
		The proof is complete.	
	\end{proof}
	
	\subsection{Semi waves }
	
	We present several  results on the semi-waves of \eqref{1.1} and its perturbations, which will play a key role in the proof Theorem \ref{th1.2}. The detailed proofs of the results in this subsection are postponed to Section 4.
	
	\begin{theorem}\label{p1}
		Suppose that \( f \) satisfies \( (\mathbf{f_m}) \) and \( \delta > 1 \).
		Then for any \( c \in \mathbb{R} \), there exists a unique function \( q = q_c \in C^2([0, \infty)) \) satisfying
			\begin{equation}\label{eq:part1}
				\begin{cases}
					d q'' - c q' + f(q) = 0, & x > 0, \\
					q(x) > 1,\quad q'(x) < 0, & x \geq 0, \\
					q(0) = \delta, \quad q(\infty) = 1.
				\end{cases}
			\end{equation}
			Moreover, the following conclusions hold:
			\begin{enumerate}
			\item[(i)] \( c_1 < c_2 \) implies \( q_{c_1}(x) < q_{c_2}(x) \) in \( (0, \infty) \) and  \( q_{c_1}'(0) < q_{c_2}'(0) \), 
			\item[(ii)] $
				\lim_{c \to \bar{c}} q_c = q_{\bar{c}} \ \text{in } C^2_{\mathrm{loc}}([0, \infty))\cap L^\infty([0, \infty)),
			$
			\item[(iii)]  \( \xi(c) := q_c'(0) - \frac{\delta}{d} c \) is strictly decreasing for  \( c \in \mathbb{R} \),
			\item[(iv)] there exists a unique $c^{*}\in\R$ such that
						\begin{align*}\label{2.7a}
			\xi(c^*)=0,\ 	\xi(c) > 0 \quad \text{for } c < c^*, \quad \xi(c) < 0 \quad \text{for } c > c^*,
			\end{align*}
			\item[(v)] \( 0>c^* > P_0(\delta) := -\sqrt{2d \int_{\delta}^{1} f(s)\, ds} \), and the function \( \delta \mapsto c^*(\delta) \) is decreasing for \( \delta > 1 \), with
			\begin{align*}
				\lim_{\delta \to 1^+} c^*(\delta) = 0.
			\end{align*}
		\end{enumerate}
	\end{theorem}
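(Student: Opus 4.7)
The plan is to analyze the ODE $dq'' - cq' + f(q) = 0$ via phase-plane methods, writing it as $q' = p$, $p' = (cp - f(q))/d$ and seeking a connecting trajectory in the region $\{q \geq 1,\ p \leq 0\}$. Since $f'(1) < 0$, the equilibrium $(1, 0)$ is a hyperbolic saddle whose stable manifold meets $\{p < 0\}$ in a unique branch that I parametrize as $p = P_c(q)$ for $q \in [1, \infty)$, satisfying $dP_c/dq = c/d - f(q)/(dP_c)$. For existence of $q_c$ I will show this branch stays strictly in $\{p < 0\}$ (if $P_c$ approached $0$ at some $q_0 > 1$, then $dP_c/dq$ would tend to $-\infty$ there because $-f(q_0)/(dP_c) \to -\infty$ while $P_c \to 0^-$, a contradiction) and reaches $q = \delta$ at finite $x$-distance, since the vector field is regular on compact subsets of $\{1 < q \leq \delta,\ p < 0\}$. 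Uniqueness is inherited from the one-dimensionality of the stable manifold, and translating so that $q = \delta$ corresponds to $x = 0$ yields the normalization $q_c(0) = \delta$.

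Parts (i) and (ii) follow from a direct comparison in the $(q,p)$-plane. Near $q = 1$, $P_c$ is tangent to the line of slope $\lambda_-(c) := [c - \sqrt{c^2 - 4df'(1)}]/(2d)$, and a short computation shows $c \mapsto \lambda_-(c)$ is strictly increasing; hence $P_{c_1}(q) < P_{c_2}(q)$ for $q$ slightly larger than $1$ when $c_1 < c_2$. If the two curves met at some $q^\sharp \in (1,\delta]$, then at $q^\sharp$ we would have $d(P_{c_2} - P_{c_1})/dq = (c_2 - c_1)/d > 0$, contradicting the ordering just before. Evaluating at $q = \delta$ gives $q_{c_1}'(0) < q_{c_2}'(0)$, and a further comparison in the $x$-variable yields $q_{c_1} < q_{c_2}$ on $(0,\infty)$. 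Continuous dependence of ODE solutions on parameters, combined with uniform exponential decay at infinity derived from a local bound $\lambda_-(c) \leq -\mu < 0$, delivers (ii).

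For (iii), I differentiate the phase-plane equation in $c$: the function $h(q) := \partial_c P_c(q)$ satisfies the linear ODE $h'(q) = f(q)\, h(q)/(dP_c(q)^2) + 1/d$ with $h(1) = 0$, so
\begin{equation*}
h(\delta) = \int_1^\delta \frac{1}{d}\, \exp\!\left(\int_s^\delta \frac{f(r)}{d P_c(r)^2}\, dr\right) ds < \frac{\delta - 1}{d},
\end{equation*}
because $f(r) < 0$ on $(1,\delta)$ forces the exponential factor to be strictly less than $1$ whenever $s < \delta$. Since $\xi'(c) = h(\delta) - \delta/d$, this yields the sharp bound $\xi'(c) < -1/d$ for every $c \in \mathbb{R}$, establishing strict monotonicity. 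The asymptotics $\xi(c) \to +\infty$ as $c \to -\infty$ and $\xi(c) \to -\infty$ as $c \to +\infty$ then follow immediately by integrating this bound, so the intermediate value theorem produces the unique $c^*$ of (iv).

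I expect part (v) to be the main obstacle, but the sharp derivative bound does most of the work. At $c = 0$, the energy identity $\tfrac{d}{2}(q')^2 + F(q) = -c\int_x^\infty (q')^2\, ds$ (with $F(q) := \int_1^q f(s)\, ds$) gives $q_0'(0) = -\sqrt{-2F(\delta)/d} = P_0(\delta)/d$, so $\xi(0) = P_0(\delta)/d < 0$, and strict monotonicity forces $c^* < 0$. Integrating $\xi'(c) < -1/d$ from $P_0(\delta)$ to $0$ yields $\xi(0) - \xi(P_0(\delta)) < -|P_0(\delta)|/d$, hence $\xi(P_0(\delta)) > \xi(0) + |P_0(\delta)|/d = 0$, and the monotonicity of $\xi$ with $\xi(c^*) = 0$ gives $c^* > P_0(\delta)$. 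For the monotonicity of $c^*(\delta)$ in $\delta$ and the limit $c^*(\delta) \to 0$ as $\delta \to 1^+$, I would apply the implicit function theorem to $\xi(c;\delta) = 0$, using the uniform lower bound $|\xi'| > 1/d$ to exclude degeneracy and the squeeze $P_0(\delta) < c^*(\delta) < 0$ together with $P_0(\delta) \to 0^-$ as $\delta \to 1^+$ to pin down the limit.
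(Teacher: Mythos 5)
Your overall strategy coincides with the paper's for existence, uniqueness, (i) and (ii): both reduce \eqref{eq:part1} to the stable-manifold branch $p=P_c(q)$ of the saddle $(1,0)$ in the phase plane, prove $P_c<0$ on $(1,\delta]$ by the same blow-up-of-slope argument, and obtain (i) by the same first-touching comparison of the curves $P_{c_1}$ and $P_{c_2}$. Where you genuinely diverge is in (iii)--(v). The paper never differentiates $P_c$ in $c$; it integrates the phase-plane equation to get the identity $\xi(c)=-\frac{c}{d}-\int_1^\delta\frac{f(s)}{d\,P_c(s)}\,ds$, reads off strict monotonicity of $\xi$ from the monotonicity of $P_c$ in $c$, and locates $c^*$ by evaluating $\xi$ at $c=0$ and at $c=dP_0(\delta)$. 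You instead derive the quantitative bound $\xi'(c)<-1/d$ from the variational equation for $h=\partial_cP_c$ and deduce both (iv) and the lower bound $c^*>P_0(\delta)$ by integrating that inequality; this is a legitimate and arguably sharper route, and your energy identity for $\xi(0)$ and your squeeze argument for $\lim_{\delta\to1^+}c^*(\delta)=0$ are correct (the latter is cleaner than the paper's contradiction argument). The price is that you must justify that $c\mapsto P_c(q)$ is differentiable and that $h$ is the solution of the singular linear ODE $h'=\frac{f}{dP_c^2}\,h+\frac1d$ selected by boundedness at $q=1$, where $P_c$ vanishes and the coefficient $f/(dP_c^2)$ is non-integrable; your representation formula for $h(\delta)$ is indeed the unique bounded solution (the homogeneous solutions blow up at $q=1$), but this point needs an explicit argument, since the paper's route avoids it entirely.

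The one genuine gap is the monotonicity of $\delta\mapsto c^*(\delta)$ in (v). The implicit function theorem together with the nondegeneracy $|\partial_c\xi|>1/d$ only yields that $c^*(\delta)$ is a $C^1$ function of $\delta$; it gives no information about the sign of its derivative. You still need $\partial_\delta\xi^\delta(c)<0$, which is precisely what drives the paper's argument: since $P_c$ does not depend on $\delta$, one has $\partial_\delta\xi^\delta(c)=P_c'(\delta)-\frac{c}{d}=-\frac{f(\delta)}{d\,P_c(\delta)}<0$ because $f(\delta)<0$ and $P_c(\delta)<0$ (equivalently, the positive integrand $f/(dP_c)$ in the identity for $\xi^\delta$ makes $\xi^\delta$ strictly decreasing in $\delta$). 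As written, your plan for this sub-claim does not close; with this one-line computation added, it does.
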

	
	In our proof of  Theorem \ref{th1.2}, we will make use of the semi-waves of \eqref{1.1} with $f$ replaced by some suitable perturbations, in order to produce the desired upper and lower solutions. 
	
	For \( 0 < \xi < \delta \), we will consider  functions $h(u)$ with the following  general monostable property:
	\begin{equation}\label{gm}
	  \begin{cases}
		h \text{ is } C^{1}, \ h > 0 \text{ in } (0,\xi), \quad h < 0 \text{ in } (\xi, \infty), \\
		h(0) = h(\xi) = 0, \ h'(0) > 0 > h'(\xi).
	\end{cases}
	\end{equation}
	It is easy to see that Theorem \ref{p1} still holds when $f$ satisfies \eqref{gm} if we replace $1$ by $\xi$ in the statements of the theorem.

	For small $\varepsilon>0$, let the functions \( f_{\varepsilon}^1(s) \) and \( f_{\varepsilon}^2(s) \) satisfy  \eqref{gm} with $\xi=1-\varepsilon$ and $\xi=1+\varepsilon$, respectively, and moreover
	\[
	f_{\varepsilon}^1(s) < f(s) < f_{\varepsilon}^2(s) \quad \text{for } s > 0, \quad \lim_{\varepsilon \to 0} \| f_{\varepsilon}^i - f \|_{C^1([0, 2\delta])} = 0.
	\]
	
	Using Theorem \ref{p1}, we can prove the following results.
		\begin{corollary}\label{l2}
		Let the functions \( f_{\varepsilon}^i \), $i=1,2$,  be  as above.
		
		\begin{itemize}
			\item[{\rm (i)}]
			For any \( c \in \mathbb{R} \), the problem \eqref{eq:part1} with \( f \) replaced by \( f_{\varepsilon}^i \) admits a unique solution, denoted by \( q_c^i(x;\varepsilon) \), satisfying $	\lim_{x \to \infty} q_c^1(x;\varepsilon) = 1 - \varepsilon$ and $\lim_{x \to \infty} q_c^2(x;\varepsilon) = 1 + \varepsilon$, respectively.	Moreover,  $q_c^1(x;\varepsilon) < q_c^2(x;\varepsilon)$ for $x>0$, with $  [q_c^1(0;\varepsilon)]' < [q_c^2(0;\varepsilon)]'$ and 
			\[
			\lim_{\varepsilon \to 0} q_c^i(x;\varepsilon) = q_c(x) \quad \text{in } C^2_{\mathrm{loc}}([0, \infty)), \quad
			\lim_{\varepsilon \to 0} \| q_c^i(\cdot;\varepsilon) - q_c \|_{L^\infty([0, \infty))} = 0.
			\]
			\item[{\rm (ii)}]
			The problem \eqref{eq:part2} with \( f \) replaced by \( f_{\varepsilon}^i \) admits a unique solution, denoted by \( (c_i^*(\varepsilon), q_i^*(x;\varepsilon)) \), satisfying $	\lim_{x \to \infty} q_1^*(x;\varepsilon) = 1 - \varepsilon$ and $\lim_{x \to \infty} q_2^*(x;\varepsilon) = 1 + \varepsilon$, respectively.
			Moreover,
			\begin{align*}
				&c_1^*(\varepsilon) < c^* < c_2^*(\varepsilon), \quad 
				\lim_{\varepsilon \to 0} c_1^*(\varepsilon) = c^*, \quad 
				\lim_{\varepsilon \to 0} c_2^*(\varepsilon) = c^*, \\
				&\lim_{\varepsilon \to 0} q_i^*(\cdot;\varepsilon) = q_c(\cdot) \  \text{in } C^2_{\mathrm{loc}}([0, \infty))\cap L^\infty([0, \infty)).
			\end{align*}
		\end{itemize}
	\end{corollary}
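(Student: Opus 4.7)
The plan is to derive the corollary from Theorem \ref{p1} applied to the perturbed nonlinearities $f_\varepsilon^i$, combined with a comparison argument for the strict orderings and a compactness-plus-uniqueness argument for the limits as $\varepsilon\to 0$.

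For part (i), existence and uniqueness of $q_c^i(\cdot;\varepsilon)$ follow directly from Theorem \ref{p1} applied to $f_\varepsilon^i$, invoking the remark in the text that the theorem extends verbatim when $1$ is replaced by the equilibrium $\xi=1\mp\varepsilon$ of the monostable perturbation (cf.\ \eqref{gm}). To establish the strict ordering $q_c^1(\cdot;\varepsilon)<q_c^2(\cdot;\varepsilon)$ on $(0,\infty)$ together with $[q_c^1(0;\varepsilon)]'<[q_c^2(0;\varepsilon)]'$, I would use that $q_c^2$ is a strict super-solution of the $f_\varepsilon^1$ equation, since $f_\varepsilon^1<f_\varepsilon^2$ gives $d(q_c^2)''-c(q_c^2)'+f_\varepsilon^1(q_c^2)<0$; combined with matching Dirichlet data $q_c^1(0)=q_c^2(0)=\delta$ and separated limits $q_c^1(\infty)=1-\varepsilon<1+\varepsilon=q_c^2(\infty)$, a maximum-principle argument for $w=q_c^2-q_c^1$, after rewriting the equation as $dw''-cw'+aw=f_\varepsilon^1(q_c^1)-f_\varepsilon^2(q_c^1)<0$ with $a$ bounded, gives $w>0$ on $(0,\infty)$ and the Hopf lemma yields $w'(0)>0$. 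For the convergence $q_c^i(\cdot;\varepsilon)\to q_c$ as $\varepsilon\to 0$, the uniform bound $1-\varepsilon\leq q_c^i\leq\delta$ and standard elliptic regularity give $C^2_{\mathrm{loc}}$ precompactness of $\{q_c^i(\cdot;\varepsilon)\}$; the limit solves \eqref{eq:part1} by the $C^1$ convergence $f_\varepsilon^i\to f$ and hence coincides with $q_c$ by the uniqueness in Theorem \ref{p1}. Global $L^\infty$ convergence will then follow from a uniform exponential tail estimate for $q_c^i-(1\mp\varepsilon)$ obtained by linearising at the stable equilibrium, where the decay rate is bounded below uniformly in $\varepsilon$ because $(f_\varepsilon^i)'(1\mp\varepsilon)\to f'(1)<0$.

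Part (ii) then follows by applying Theorem \ref{p1}(iv) to $f_\varepsilon^i$, yielding the unique zero $c_i^*(\varepsilon)$ of $\xi_i^\varepsilon(c):=[q_c^i(\cdot;\varepsilon)]'(0)-(\delta/d)c$. The strict inequalities from part (i) give $\xi_1^\varepsilon(c)<\xi(c)<\xi_2^\varepsilon(c)$ for every $c$; evaluating at $c=c^*$ with $\xi(c^*)=0$ and using the strict monotonicity of $\xi_i^\varepsilon$ from Theorem \ref{p1}(iii) immediately yields $c_1^*(\varepsilon)<c^*<c_2^*(\varepsilon)$. For $c_i^*(\varepsilon)\to c^*$, apply Theorem \ref{p1}(v) to the perturbed problems to obtain uniform bounds, extract subsequential limits $\bar c$, and use the joint continuity $(\varepsilon,c)\mapsto[q_c^i(\cdot;\varepsilon)]'(0)$ established in part (i) together with the uniqueness of $c^*$ to conclude $\bar c=c^*$. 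The convergence $q_i^*(\cdot;\varepsilon)\to q_{c^*}=q^*$ in both $C^2_{\mathrm{loc}}$ and $L^\infty$ is then obtained by invoking part (i) once more with the varying speed $c=c_i^*(\varepsilon)$.

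The main obstacle will be the uniform-in-$\varepsilon$ $L^\infty$ tail estimate: pointwise and local $C^2$ convergence are routine, but global uniform closeness requires quantifying the exponential approach to the equilibrium in a way that is stable under the perturbation $\varepsilon\to 0$. The very same estimate is indispensable in part (ii) when passing to the limit in the semi-wave relation $[q_c^i(0;\varepsilon)]'=(\delta/d)c_i^*(\varepsilon)$ with both the nonlinearity and the speed varying simultaneously.
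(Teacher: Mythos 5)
Your overall strategy coincides with the paper's: apply Theorem \ref{p1} to $f_\varepsilon^i$ for existence and uniqueness, deduce $c_1^*(\varepsilon)<c^*<c_2^*(\varepsilon)$ by evaluating the strictly decreasing functions $\xi_\varepsilon^i$ at $c^*$ where $\xi(c^*)=0$, and pass to the limit $\varepsilon\to0$ by compactness plus the uniqueness statement of Theorem \ref{p1}. The two places where you deviate deserve comment. First, for the ordering $q_c^1<q_c^2$ the paper stays in the phase plane: Lemma \ref{l1}(ii) gives $P_c(q;f_\varepsilon^1)<P_c(q;f_\varepsilon^2)$, which yields $[q_c^1(0;\varepsilon)]'<[q_c^2(0;\varepsilon)]'$ immediately (no Hopf lemma needed), and a first-crossing argument on the trajectories gives the strict ordering for all $x>0$. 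Your route via the maximum principle for $w=q_c^2-q_c^1$ has a genuine gap as written: after linearisation the zeroth-order coefficient $a=(f_\varepsilon^2)'(\theta)$ has no definite sign on the range of values involved (the hypothesis $(\mathbf{f_m})$ only gives $f'(1)<0$, not a sign of $f'$ on all of $(1,\delta)$), and the inequality $dw''-cw'+aw\le 0$ with $w(0)=0$, $w(+\infty)>0$ does not by itself force $w>0$ on an unbounded interval when $a$ may be positive. You would need a sliding/first-touching argument exploiting the strict monotonicity of both profiles, which is exactly what the phase-plane formulation packages for free.

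Second, for the global $L^\infty$ convergence, which you rightly single out as the main obstacle, the paper avoids exponential tail estimates altogether: it establishes the sandwich $q_c^1(x;\varepsilon)<q_c(x)<q_c^2(x;\varepsilon)$ and the monotonicity of $q_c^i(\cdot;\varepsilon)$ in $\varepsilon$, and then combines $C^2_{\mathrm{loc}}$ convergence on a compact set with the fact that on the tail all three functions lie between $1-\varepsilon$ and values within any prescribed $\eta$ of their limits at $+\infty$, which differ by at most $\varepsilon$. Your linearisation argument is workable but heavier: a uniform lower bound on the decay rate is not enough; you also need the prefactor (equivalently, a uniform-in-$\varepsilon$ entry time into a fixed neighbourhood of the stable equilibrium), and establishing that essentially brings you back to the sandwich. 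In part (ii) your argument matches the paper's, except that the paper obtains $c_i^*(\varepsilon)\to c^*$ from monotonicity of $c_i^*(\varepsilon)$ in $\varepsilon$ rather than subsequential compactness, and your final appeal to ``part (i) with the varying speed $c=c_i^*(\varepsilon)$'' requires joint continuity in $(c,\varepsilon)$, which does follow from the uniform elliptic estimates and the double monotonicity but should be stated rather than assumed.
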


	For \( \bar{c}_0 > c^* \), let \( \bar{q}_0 = \bar{q}_{\bar{c}_0} \) be the unique solution to 
	\begin{equation}\label{eq:initial}
		\begin{cases}
			q'' - c q' + f(q) = 0, & z \in (0, \infty), \\
			q(0) = \delta > 1,\quad q(\infty) = 1, \\
			q(z) > 1,\quad q'(z) < 0, & z \in [0, \infty),
		\end{cases}
	\end{equation}
	with $c=\bar c_0$.
	By Theorem \ref{p1}, we have \( \bar{c}_0 > \frac{d}{\delta} \bar{q}_0'(0) \), and thus there exists a sufficiently large constant \( M > 0 \) such that $\bar{c}_0 > \frac{d}{\delta} \bar{q}_0'(0) + \frac{1}{M}$.
	Then, for $n=0,1,2, ...$,  let \( \bar{q}_n= \bar{q}_{\bar{c}_n} \) be the solution of \eqref{eq:initial} with $c=\bar c_n$, and 
	\[
	 \bar{c}_{n+1}:=\frac{d}{\delta} \bar{q}_n'(0) + \frac{1}{M+n} , \quad n = 0, 1, 2, \ldots.
	\]
	
	Similarly, for \( \underline{c}_0 < c^* \), let \( \underline{q}_0=\underline{q}_{\underline{c}_0} \) be the unique solution of \eqref{eq:initial} with $c=\underline c_0$. Again, by Theorem \ref{p1}, one has \( \underline{c}_0 < \frac{d}{\delta} \underline{q}_0'(0) \), and hence, for sufficiently large \( M > 0 \), $\underline{c}_0 < \frac{d}{\delta} \underline{q}_0'(0) - \frac{1}{M}$.
	We may then define the sequence \( (\underline{c}_n, \underline{q}_n) := (\underline{c}_n, \underline{q}_{\underline{c}_n}) \) inductively with $\underline q_n$  the solution of \eqref{eq:initial} with $c=\underline c_n$ and
	\[
	 \underline{c}_{n+1}:=\frac{d}{\delta} \underline{q}_n'(0) - \frac{1}{M+n}, \quad n = 0, 1, 2, \ldots.
	\]
	
	\begin{lemma}\label{l4}
		Let \( (\bar{c}_n, \bar{q}_n) \) and \( (\underline{c}_n, \underline{q}_n) \) be defined as above. Then \( \underline{c}_n \nearrow c^* \) and \( \bar{c}_n \searrow c^* \) as \( n \to \infty \), and
		\begin{equation}\label{2.14a}
			\begin{cases}
				\displaystyle\lim_{n \to \infty} \bar{q}_n(x) = q^*(x), \quad \lim_{n \to \infty} \underline{q}_n(x) = q^*(x) \quad \text{in } C^2_{\mathrm{loc}}([0, \infty)),\\[1ex]
				\displaystyle\lim_{n \to \infty} \| \bar{q}_n - q^* \|_{L^\infty([0, \infty))} = 0, \quad 
				\lim_{n \to \infty} \| \underline{q}_n - q^* \|_{L^\infty([0, \infty))} = 0.
			\end{cases}
		\end{equation}
	\end{lemma}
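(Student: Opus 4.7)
The plan is to establish strict monotonicity of the sequences and then pass to the limit using the continuous-dependence results in Theorem \ref{p1}. I focus on $\{(\bar c_n, \bar q_n)\}$; the argument for $\{(\underline c_n, \underline q_n)\}$ is completely symmetric once the relevant inequalities are reversed.

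First I would verify by induction that $\bar c_n > c^*$ for every $n\ge 0$. The hypothesis $\bar c_0 > c^*$ is our starting point. If $\bar c_n > c^*$, then part (i) of Theorem \ref{p1} gives $\bar q_n'(0) > (q^*)'(0) = (\delta/d)\, c^*$, and therefore
\[
\bar c_{n+1} \;=\; \tfrac{d}{\delta}\bar q_n'(0) + \tfrac{1}{M+n} \;>\; c^* + \tfrac{1}{M+n} \;>\; c^*.
\]

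Next I would show inductively that $\bar c_{n+1} < \bar c_n$. The case $n=0$ is arranged by the choice of $M$, since by construction $\bar c_0 > (d/\delta)\bar q_0'(0) + 1/M = \bar c_1$. Assuming $\bar c_{n+1} < \bar c_n$, part (i) yields $\bar q_{n+1}'(0) < \bar q_n'(0)$, and then
\[
\bar c_{n+2} - \bar c_{n+1} \;=\; \tfrac{d}{\delta}\bigl(\bar q_{n+1}'(0) - \bar q_n'(0)\bigr) + \Bigl(\tfrac{1}{M+n+1} - \tfrac{1}{M+n}\Bigr) \;<\; 0,
\]
since both summands on the right are negative. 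Hence $\{\bar c_n\}$ is strictly decreasing and bounded below by $c^*$, so it converges to some $\bar c_\infty \ge c^*$. By the continuous dependence in part (ii), $\bar q_n \to q_{\bar c_\infty}$ in $C^2_{\mathrm{loc}}([0,\infty)) \cap L^\infty([0,\infty))$, and in particular $\bar q_n'(0) \to q_{\bar c_\infty}'(0)$. Letting $n\to\infty$ in the defining recursion yields $\bar c_\infty = (d/\delta)\, q_{\bar c_\infty}'(0)$, i.e.\ $\xi(\bar c_\infty) = 0$. The uniqueness in part (iv) then forces $\bar c_\infty = c^*$ and $q_{\bar c_\infty} = q^*$, which gives both displayed convergences of $\bar q_n$ in \eqref{2.14a}.

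For $(\underline c_n, \underline q_n)$ the argument is symmetric: induction shows $\underline c_n < c^*$ and $\underline c_{n+1} > \underline c_n$, so $\{\underline c_n\}$ increases to some $\underline c_\infty \le c^*$ satisfying $\xi(\underline c_\infty) = 0$, which again forces $\underline c_\infty = c^*$. I do not anticipate a serious obstacle here; the only point requiring care is the inductive step for monotonicity, where one must combine the strict monotonicity of $c\mapsto q_c'(0)$ from part (i) with the fact that the corrections $1/(M+n)$ themselves decrease in $n$, so that both contributions to $\bar c_{n+2}-\bar c_{n+1}$ have the right sign.
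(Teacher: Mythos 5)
Your proposal is correct and follows essentially the same route as the paper's proof: establish by induction that $\{\bar c_n\}$ is strictly decreasing and bounded below by $c^*$ (and symmetrically for $\{\underline c_n\}$), pass to the limit in the recursion using the continuous dependence of $q_c$ on $c$, and identify the limit via the uniqueness of the root of $\xi$. The only cosmetic differences are that you deduce $\bar c_n>c^*$ directly from the monotonicity of $c\mapsto q_c'(0)$ rather than from $\xi(\bar c_n)<0$, and you obtain the $L^\infty$ convergence by citing Theorem \ref{p1}(ii) where the paper rederives it from the monotonicity of $\bar q_n$ in $n$ together with $\bar q_n(\infty)=1$; both are legitimate.
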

	
The sequences \( (\bar{c}_n, \bar{q}_n) \) and \( (\underline{c}_n, \underline{q}_n) \)	will be used in our ``touching method" in the next subsection.

	\subsection{The touching method}   We introduce a new method to complete the proof of Theorem \ref{th1.2}. The method involves a family of upper and lower solutions obtained from some semi-waves (they are actually \( (\bar{c}_n, \bar{q}_n) \) and \( (\underline{c}_n, \underline{q}_n) \) in Lemma \ref{l4}), and the key idea is to obtain a family of well-controlled touches of the solution $u(t,x)$ of \eqref{1.1}  from above by a suitable upper solution in this family, and from below by a suitable lower solution in this family, which will provide the desired estimate for $g(t)$ and $u(t,x)$ by applying the usual comparison principle in a time interval immediately before the touching time.
	 
	  It is worth mentioning that the comparison principle developed in \cite{Du2024} (see Lemma 2.2 there) for moving boundary problems seems difficult to apply in our setting where the sem-wave is retreating.

	\begin{proof}[\underline {Proof of Theorem \ref{th1.2}}]
		For \( \varepsilon > 0 \), let the functions \( f_{\varepsilon}^1(u) \) and \( f_{\varepsilon}^2(u) \) satisfy \eqref{gm} with $\xi=1-\varepsilon$ and $\xi=1+\varepsilon$, respectively, and
		\begin{align*}
			&f_{\varepsilon}^1(s) < f(s) < f_{\varepsilon}^2(s) \quad \text{for} \quad s > 0,\\
			& \lim_{\epsilon \to 0} \| f_{\varepsilon}^i - f \|_{C^1([0, 2\delta])} = 0.
		\end{align*}
		Denote the corresponding semi-wave solutions by \( (c_{1}^{*}(\varepsilon), q_{1}^{*}(x; \varepsilon)) \) and \( (c_{2}^{*}(\varepsilon), q_{2}^{*}(x; \varepsilon)) \), which satisfy \eqref{eq:part2} with \( f \) replaced by \( f_{\varepsilon}^1 \) and \( f_{\varepsilon}^2 \), respectively. Then, by Corollary \ref{l2}, we have
		\begin{align}\label{2.26a}
			c_1^{*}(\varepsilon) < c^{*} < c_2^{*}(\varepsilon)<0, \quad \lim_{\varepsilon \to 0} c_1^{*}(\varepsilon) = c^{*}, \quad \lim_{\varepsilon \to 0} c_2^{*}(\varepsilon) = c^{*}.
		\end{align}
		
		Next, for each small positive \( \varepsilon \), we  show that
		\begin{align}\label{2.27a}
			0<-c_2^{*}(\varepsilon) \leq \liminf_{t \to \infty} g'(t) \leq \limsup_{t \to \infty} g'(t) \leq -c_1^{*}(\varepsilon).
		\end{align}
		We note that the desired conclusion for \( g'(t) \) follows from \eqref{2.27a} by letting \( \varepsilon \to 0 \) and using \eqref{2.26a}.
		
		From Lemma \ref{le2.7}, we know that \( 0\leq  \limsup_{t\to\infty}g'(t)\leq   c_M \) for some constant $c_M\geq 0$. Let 
		\[
		\begin{cases} \underline{c}_0 :=- \max\{c_M, -c_1^{*}(\varepsilon)\}-1<c^*,\\
		 \bar{c}_0 := 0 >c^*. 
		 \end{cases}
		 \]
		 Then we  define two sequences of semi-wave pairs \( (\underline{c}_n, \underline{q}_n) \) and \( (\bar{c}_n, \bar{q}_n) \) as in Lemma \ref{l4}, namely
		 \begin{align}\label{2.28c}
		\underline{c}_{n+1}:=	\frac{d}{\delta} \underline{q}_n'(0) - \frac{1}{M+n} < c_1^{*}(\varepsilon) < c_2^{*}(\varepsilon) < \bar{c}_{n+1}:=\frac{d}{\delta} \bar{q}_n'(0) + \frac{1}{M+n} \leq 0,
		\end{align}
		where $\underline q_n$  solves \eqref{eq:initial} with $(c, f)=(\underline c_n, f_{\varepsilon}^1)$,  $\bar q_n$ solves \eqref{eq:initial} with $(c, f)=(\bar c_n, f_{\varepsilon}^2)$, and 
		 \( M > 0 \) is a suitable constant. As in Lemma \ref{l4}, we can easily show
		 \begin{equation}\label{c_n}
		 \mbox{ \( \underline{c}_{n+1}\nearrow   c_1^{*}(\varepsilon) \) and \( \bar{c}_{n+1} \searrow c_2^{*}(\varepsilon) \) as \( n \to \infty \).}
		 \end{equation}
		
		In the following, we will use an induction argument to prove that for each \( n\geq 1\),
		\begin{align}\label{2.28}
			0\leq -\frac{d}{\delta} \bar{q}_n'(0)  \leq \liminf_{t \to \infty} g'(t) \leq \limsup_{t \to \infty} g'(t) \leq -  \frac{d}{\delta} \underline{q}_n'(0).
		\end{align}
		It is easily seen that \eqref{2.27a} follows from \eqref{2.28}.
		
		To prove \eqref{2.28}, we will construct a family of lower solutions associated with \( \underline{c}_n \), and a family of upper solutions associated with \( \bar{c}_n \). 
		Unlike the usual application of upper and lower solutions, here  the upper bound for \( |g'| \) will be obtained by using lower solutions.
		
		We will achieve this goal in four steps.

		\textbf{Step 1}: {Construction of the initial pair of upper and lower solutions.}
		
		With \( L > 0 \) regarded as a parameter in a range to be specified later,  we define a pair of lower and upper solutions, respectively, by
		\begin{align*}
			&\underline{u}^{L}(t,x) := \underline{q}_0(x -\underline{g}^{L}(t)), \quad t \geq 0,\ x \geq \underline{g}^{L}(t) := -\underline{c}_0 t - L,\\
			&\bar{u}^{L}(t,x) := \bar{q}_0(x -\bar{g}^{L}(t)), \quad t \geq 0,\ x \geq \bar{g}^{L}(t) := -\bar{c}_0 t + L,
		\end{align*}
		 Clearly, \( \underline{u}^{L}(t,x) \) is a semi-wave propagating  rightward with speed \( -\underline{c}_0 \), whose retreating front is \( x=\underline{g}^{L}(t) = -\underline{c}_0 t - L \); similarly, \( \bar{u}^{L}(t,x) \) moves  rightward with a slower speed \( -\bar{c}_0 \) and retreating front \( x=\bar{g}^{L}(t) = -\bar{c}_0 t + L \).

		By Lemma \ref{u}, there exists \( \tilde{T} \geq T_0 \) such that 
		\[
		\begin{cases}
		 1 - \frac{\varepsilon}{2} \leq u(t, x) \leq \delta & \mbox{ for  \( t \geq \tilde{T},\  x \in [g(t), \infty),\)}\\
		1 - \frac{\varepsilon}{2} \leq u(t, x) \leq 1 + \frac{\varepsilon}{2} &  \mbox{  for \( t \geq \tilde{T}, \ x \in [g(t)+X_0, \infty). \)}
		 \end{cases}
		 \]
	To simplify notations, in the following analysis, we assume \( \tilde{T} = 0 \), which can be achieved by replacing \( g_0 \) with \( g(\tilde{T}) \) and \( u_0(x) \) with \( u(\tilde{T}, x) \).

	Since $\underline q_0(+\infty)=1-\varepsilon$ and $\bar q_0(x)>1+\varepsilon$ for $x\geq 0$, we can choose a sufficiently large constant \( L_0 > |g_0| \) such that for all \( L \geq L_0 \),
		\begin{equation}\label{2.29a}
			\begin{cases}
				\underline{u}^{L}(0, x) < u(0, x),\quad &x \geq g_0,\\
				u(0,x) < \overline{u}^{L}(0,x), \quad &x \geq \bar{g}^{L}(0) = L.
			\end{cases}
		\end{equation}
		Based on this, we will later show that for each \( L \), the function \( \underline{u}^{L}(t, x) \) serves as a lower solution for \( u(t, x) \) up to the first contact time between their left boundaries \( \underline{g}^{L}(t) \) and \( g(t) \). This means that the graph of \( \underline{u}^{L} \) always lies strictly below that of the solution \( u \) before the two boundaries meet. A similar result holds for \( \overline{u}^{L} \), which acts as an upper solution to \( u \).

		Define
		\begin{equation*}
			\begin{cases}
				\underline T^0 := \sup \left\{ t \geq 0 :  \underline{g}^{L_0}(s) < g(s)\ \text{for all } s \in [0, t] \right\}, \\[2mm]
				\bar T^{0} := \sup \left\{ t \geq 0 :  \bar{g}^{L_0}(s) > g(s)\ \text{for all } s \in [0, t] \right\}.
			\end{cases}
		\end{equation*}
 Since $\underline g^{L_0}(0)=-\bar g^{L_0}(0)=-L_0<g(0)$, we see that $\underline T^0$ and $\bar T^0$ are well-defined. We claim that both $\underline T^0$ and $\bar T^0$ are finite.
		Indeed, since $\bar c_0=0$ and hence \( \bar{g}^{L_0}(s) \equiv L_0 \) is constant, and since \( \lim_{t \to \infty} g(t) = \infty \), we immediately see that \( \bar T^0 < +\infty \).  The assertion $\underline T^0<+\infty$ is a consequence of
		\[
		 \limsup_{t \to \infty} g'(t) \leq c_M < \max\{c_M, c_1^{*}(\varepsilon)\} + 1= -\underline{c}_0\equiv [\bar g^{L_0}(t)]'. 
		 \]
		  		
		\textbf{Step 2}. We use a touching method to prove that
		\begin{equation}\label{2.31c}
			\begin{cases}
				&  -\dfrac{d}{\delta} \, \bar{q}_0'(0)  \leq \liminf_{t \to \infty} g'(t) \leq \limsup_{t \to \infty} g'(t) \leq  - \dfrac{d}{\delta} \, \underline{q}_0'(0) ,\\
				&\underline{q}_0(x - g(t))<u(t, x)<	\bar{q}_0(x - g(t))\quad\mbox{for }  x > g(t),\  t\geq \max\{\underline T^0 , \bar T^0 \}.
			\end{cases}
		\end{equation}
		
		From the definition of \( \underline T^0\), we see that
		\[
		g(t)>\underline g^{L_0}(t) \mbox{ for } t\in [0, \underline T^0),\ g(\underline T^0)=\underline g^{L_0}(\underline T^0).
		\]
		It follows that 
		\[
		u(t, g(t))=\delta>\underline u(t, g(t)) \mbox{ for } t\in [0, \underline T^0).
		\]
		
	 We
		 now compare $u(t,x)$ with $\underline u^{L_0}(t,x)$ over the region $\Omega_0:=\{(t,x) \mid t \in [0, \underline T^0],\ x > g(t)\}$. Since \eqref{2.29a} holds and
	\[
	(\underline{u}^{L_0})_t - d (\underline{u}^{L_0})_{xx} = f_{\varepsilon}^1(\underline{u}^{L_0}) \leq f(\underline{u}^{L_0}) \mbox{ in } \Omega_0,
	\]
	The usual comparison principle gives
	\[
	\underline u^{L_0}(t,x)<u(t,x) \mbox{ for } (t,x)\in\Omega_0.
	\]
		Since 
	\[
	g(\underline T^0)=\underline g^{L_0}(\underline T^0) \mbox{ and }
	u(\underline T^0, g(\underline T^0))=\delta=\underline u^{L_0}(\underline T^0, \underline g^{L_0}(\underline T^0)),
	\]
	the Hopf boundary lemma implies	
	\[
		 u_x(\underline T^0, g(\underline T^0)) > \underline{u}_x^{L_0}(\underline T^0, g(\underline T^0)) =-\frac d\delta \underline{q}_0'(0) =:\beta_1,
	\]	
	and hence
	\[
	g'(\underline T^0)=-\frac d\delta u_x(\underline T^0, g(\underline T^0))<\beta_1.
	\]
	We have thus proved
		\begin{equation}\label{2.30}
			\begin{cases}
				 \underline{u}^{L_0}(t, x) < u(t, x) \mbox{ for } t\in [0, \underline T^0],\ x > g(t), \\
				\underline{g}^{L_0}(\underline T^0) = g(\underline T^0), \quad g'(\underline T^0) < -\dfrac{d}{\delta} \, \underline{q}_0'(0) =\beta_1.
			\end{cases}
		\end{equation}
		Therefore,	 in the $xu$-plane,  as time increases from $t=0$ to $t=\underline T^0$, the evolving graph of $u=u(t,x) (x\geq g(t))$ is touched by that of  the lower solution $u=\underline u^{L_0}(t,x) (x>\underline g^{L_0}(t))$ from below for the first time  at $t=\underline T^0$, and the touching point is $(x,u)=(g(\underline T^0), u(\underline  T^0, g(\underline T^0))=(g(\underline T^0), \delta) $. We will henceforth call the above argument which leads to \eqref{2.30} the ``touching method".

		This implies in particular that $g'(t)<\beta_1$ for $t>\underline T^0$ but close to $\underline T^0$. We show below, by the touching method again,  that
		\begin{equation}\label{2.32c}
			g'(t) < \beta_1 \quad \text{for all } \quad t>\underline T^0.
		\end{equation}
		
		Otherwise there exists \( T > \underline T^0 \) such that \( g'(t) < \beta_1 \) for \( t \in [\underline T^0, T) \) but \( g'(T) = \beta_1 \).
		 Define
		\[
		L^1 := -\underline{c}_0 T - g(T), \quad k^1(t) := \underline{g}^{L^1}(t) - g(t) = -\underline{c}_0 t - L^1 - g(t).
		\]
		We show next that 
		\begin{equation}\label{2.32}
			L^1 > L_0 ,\ k^1(t) < 0 \quad \text{for } t \in [0, T), \quad  k^1(T) = 0.
		\end{equation}
		Since $g(\underline T^0) = \underline{g}^{L_0}(\underline T^0) = -\underline{c}_0 \underline T^0 - L_0$ (see \eqref{2.30}),
		and by our assumption \( g'(t) <\beta_1 \) on \( [\underline T^0, T) \), we have
		\begin{align*}
			L^1 - L_0 &= -\underline{c}_0 T - g(T) + \underline{c}_0 \underline T^0 + g(\underline T^0) \\
			&= \int_{\underline T^0}^T \left[ -\underline{c}_0 + g'(s) \right] ds > 0.
		\end{align*}
		 Direct computation gives \( k^1(T) = -\underline{c}_0 T - L^1 - g(T) = 0 \).	For \( t \in [\underline T^0, T) \), we have $k^1(t)' = -\underline{c}_0 + g'(t) < 0$
		and hence \( k^1(t) < k^1(T)=0 \) for $t\in [\underline T^0, T)$. For \( t \in [0, \underline T^0] \),  \( L^1 > L_0 \) implies 
		\( \underline{g}^{L^1}(t) < \underline{g}^{L_0}(t) \), and thus $k^1(t) < \underline{g}^{L_0}(t) - g(t) \leq 0$ by
		the definition of \( \underline T^0 \). Hence \eqref{2.32} holds.
		
		We are now able to compare
		$ u(t,x) $ with $ \underline{u}^{L^1}(t,x)$ in the region $\Omega_1:=\{(t,x): t\in [0, T],\ x>g(t)\}$ and conclude that
		 \( u(t,x) > \underline{u}^{L_1}(t,x) \) in $\Omega_1$. Moreover, we can use the Hopf boundary lemma to obtain
		\[
		u_x(T, g(T)) > \underline{u}_x^{L^1}(T, g(T)) = \underline{q}_0'(0),
		\]
		which implies  
		\[
		g'(T) < -\frac{d}{\delta} \underline{q}_0'(0) = \beta_1,
		\]
		a contradiction to the assumption \( g'(T) = \beta_1 \). This proves 	
		 \eqref{2.32c}.
		 
		Denote for each \( s > \underline T^0 \),
		\[
		L_s:= -g(s) - \underline{c}_0 s.
		\]
		Since \( g'(s) <\beta_1< -\underline{c}_0 \) for all such \( s \), we see that \( L_s \) is strictly increasing, which implies that
		\[ \underline{g}^{L_s}(t) = g(t) \mbox{ for $ t = s $ and $\underline{g}^{L_s}(t) < g(t)$ for $t \in [\underline T^0, s)$}.
		\]
		By \eqref{2.30} and the monotonicity of $\underline q_0$ we obtain
		\[
		u(\underline T^0, x)> \underline q_0(x-\underline c_0 T_0-L_s) \mbox{ for } x> g(\underline T^0).
		\]		
		 Hence we can use the comparison principle to compare $u(t,x)$ with $\underline q_0(x -\underline c_0 t-L_s)$
		  over the region $\{(t,x): t\in [\underline T^0, s],\ x> g(t)\}$ to obtain $u(t,x)>\underline q_0(x -\underline c_0 t-L_s)$ in this region, and in particular,
		\begin{equation}\label{u>}
		\underline{q}_0(x - g(s))= \underline q_0(x -\underline c_0 s-L_s)< u(s, x) \quad \mbox{ for } x > g(s),\ s>\underline T^0.
		\end{equation}
		
		Analogously, $\bar T^0$ is the touching time of $u(t,x)$ by the upper solution $\bar u^{L_0}(t,x)$ from above, and we can similarly show, by using the comparison principle and Hopf boundary lemma,
		that for all \( t \geq \bar T^0 \),
		\begin{equation}\label{2.32b}
			g'(t) > -\bar{c}_0 \quad \text{and} \quad \bar{q}_0(x - g(t)) >u(t, x)\quad\mbox{for } x > g(t).
		\end{equation}
		
		From \eqref{2.32c}, \eqref{u>} and \eqref{2.32b}, we see that \eqref{2.31c} holds, and Step 2 is thus completed.
		\medskip
		
		{\bf Step 3}.  We use an induction argument to prove that for every positive integer $j$, 
		\begin{equation}\label{2.37a}
			\begin{cases}
				-\dfrac{d}{\delta} \, \bar{q}_j'(0) \leq \liminf_{t \to \infty} g'(t) \leq \limsup_{t \to \infty} g'(t) \leq - \dfrac{d}{\delta} \, \underline{q}_j'(0),\\
				\underline{q}_j(x - g(t)) < u(t, x) < \bar{q}_j(x - g(t))\quad\mbox{for   $x > g(t)$ and all sufficiently large \( t \)}.
			\end{cases}
		\end{equation}
		More precisely, we show that \eqref{2.37a}
		holds for \( j = n \) provided that it is valid for all \( 0 \leq j \leq n - 1 \).
		
		Define a family of lower and upper solutions as follows
		\begin{align*}
			\underline{u}_n^{L}(t,x) &:= \underline{q}_n(x - \underline{g}_n^{L}(t)), \quad t \geq 0,\ x \geq \underline{g}_n^{L}(t) := -\underline{c}_n t - L, \\
			\bar{u}_n^{L}(t,x) &:= \bar{q}_n(x - \bar{g}_n^{L}(t)), \quad t \geq 0,\ x \geq \bar{g}_n^{L}(t) := -\bar{c}_n t + L,
		\end{align*}
		where \( L > 0 \) is a large constant, and \( (\underline{c}_n, \underline{q}_n) \) and \( (\bar{c}_n, \bar{q}_n) \) are defined at the beginning of the proof by using \eqref{2.28c}.
		
		As in Step 1, choose a constant \( L_n > |g_0| \) such that for all \( L \geq L_n \),
		\[
		\begin{cases}
			\underline{u}_n^{L}(0, x) < u(0, x), & x \geq g_0, \\
			u(0, x) < \bar{u}_n^{L}(0, x), & x \geq \bar{g}_n^{L}(0) = L.
		\end{cases}
		\]
		
		Define 
		\begin{equation*}
			\begin{cases}
				\underline {T}^n:= \sup \left\{ t \geq 0 :  \underline g _n^{L_n}(s) < g(s) \ \text{for all } s \in [0, t] \right\}, \\
				\overline{T}^n := \sup \left\{ t \geq 0 : \bar g _n^{L_n}(s) > g(s) \ \text{for all } s \in [0, t]  \right\}.
			\end{cases}
		\end{equation*}
		Then \( \underline T^{n} < \infty \) and \( \bar T^n < \infty \), since by the induction assumption,
		\[
		-\dfrac{d}{\delta} \, \bar{q}_{n-1}'(0) \leq \liminf_{t \to \infty} g'(t) \leq \limsup_{t \to \infty} g'(t) \leq - \dfrac{d}{\delta} \, \underline{q}_{n-1}'(0),
		\]
		and 
		\[
		\begin{cases}
			[\underline{g}_n^{L_n}]' = -\underline{c}_n = -\dfrac{d}{\delta} \, \underline{q}_{n-1}'(0) + \dfrac{1}{M + n - 1} > \limsup_{t \to \infty} g'(t), \\
			[\bar{g}_n^{L_n}]' = -\bar{c}_n = -\dfrac{d}{\delta} \, \bar{q}_{n-1}'(0) - \dfrac{1}{M + n - 1} < \liminf_{t \to \infty} g'(t).
		\end{cases}
		\]
	
		Repeating the touching method in Step 2 for the pair \( (\underline{u}_n^{L_n}, \bar{u}_n^{L_n}) \), we see that \eqref{2.37a} holds for $j=n$.
		\medskip
		
		{\bf Step 4}. We complete the proof of the theorem.
		
		By Step 2 and Step 3, we see that \eqref{2.37a} holds for every positive integer $j$. 
		Now \eqref{1.2} follows directly from \eqref{2.37a} and \eqref{c_n}. To prove \eqref{1.3}, 
				it suffices to show that for any given small \( \epsilon_1 > 0 \), there exists \( T_{\epsilon_1} > 0 \) such that
		\begin{align}\label{3.21a}
			\sup_{x \geq g(t)} |u(t,x) - q^*(x - g(t))| < 2\epsilon_1 \quad \text{for all } t \geq T_{\epsilon_1}.
		\end{align}
		
		By Corollary~\ref{l2}, there exists small \( \varepsilon > 0 \) such that
		\[
		\sup_{x \geq 0} |q_i^*(x; \varepsilon) - q^*(x)| < \epsilon_1, \quad \text{for } i = 1, 2.
		\]
		Moreover, by Lemma~\ref{l4} (applied with \( f \) replaced by \( f_\varepsilon^i \)), there exists a large integer \( N > 0 \) such that
		\[
		\sup_{x \geq 0} |\underline{q}_N(x) - q_1^*(x; \varepsilon)| < \epsilon_1, \quad 
		\sup_{x \geq 0} |\bar{q}_N(x) - q_2^*(x; \varepsilon)| < \epsilon_1.
		\]
		In view of Step 3, there exists \( T_N > 0 \) such that 
		\[
		\underline{q}_N(x - g(t)) < u(t, x) < \bar{q}_N(x - g(t)) \quad \mbox{for } x > g(t),\  t \geq T_N.
		\]
		Combining the above estimates we obtain, for \( t \geq T_N \) and \( x > g(t) \), 
		\begin{align*}
			u(t,x) - q^*(x - g(t)) 
			&= \big[u(t,x) - \underline{q}_N(x - g(t))\big] 
			+ \big[\underline{q}_N(x - g(t)) - q_1^*(x - g(t); \varepsilon)\big] \\
			&\quad + \big(q_1^*(x - g(t); \varepsilon) - q^*(x - g(t))\big) \\
			&\geq 0 - \epsilon_1 - \epsilon_1 = -2\epsilon_1,
		\end{align*}
		\begin{align*}
			u(t,x) - q^*(x - g(t)) 
			&= \big[u(t,x) - \bar{q}_N(x - g(t))\big]
			+ \big[\bar{q}_N(x - g(t)) - q_2^*(x - g(t); \varepsilon)\big] \\
			&\quad + \big[q_2^*(x - g(t); \varepsilon) - q^*(x - g(t))\big] \\
			&\leq 0 + \epsilon_1 + \epsilon_1 = 2\epsilon_1.
		\end{align*}
		Hence \eqref{3.21a} holds, and \eqref{1.3} is proved. The proof is now complete.
	\end{proof}
	
	\section{Proof of the results on semi-waves in Subsection 3.2}\label{AppendixA}

	\subsection{An ODE for the  semi-wave}
	We call \( q(z) \) a semi-wave associated with \eqref{1.1} of speed \( c \), if \( (c, q(z)) \) satisfies
	
	\begin{equation}\label{s1}
		\left\{
		\begin{array}{l}
			dq'' - c q' + f(q) = 0,\  q>1 \quad \text{for} \quad z \in (0, \infty), \\
			q(0) = \delta>1, \quad q(\infty) = 1.
		\end{array}
		\right.
	\end{equation}
	The first equation in \eqref{s1} can be written in the equivalent form
	\begin{equation}\label{s2}
		q' = p, \quad p' = \frac{1}{d}[c p - f(q)].
	\end{equation}
	So a strict decreasing solution \( q(z) \) of \eqref{s1} corresponds to a trajectory \( (q(z), p(z)) \) of \eqref{s2} that starts from the point \( (\delta, \omega) \) with \( \omega  < 0 \) in the \( (q, p) \)-plane and ends at the point \( (1, 0) \) as \( z \to \infty \).
	
	If \( p(z) = q'(z) < 0 \) for all \( z > 0 \), then the trajectory can be expressed as a function \( P(q) \) for \( q \in [1,  \delta] \) satisfying
		\begin{align}\label{P}
			\frac{dP}{dq} \equiv P' = \frac{c}{d} - \frac{f(q)}{dP} \quad \text{for } q \in (1, \delta), \quad   P(1) = 0, \ P(\delta) = \omega.
	\end{align}
	To emphasize the dependence on \( c \), we denote \( P(q) \) by \( P_c(q) \), and we will investigate the properties of this function.	It is easily checked that
	\[
	P_0(q) := -\sqrt{\frac{2}{d} \int_{q}^{1} f(s)  ds}, \quad q \in [1, \delta], 
	\]
	solves \eqref{P} with \( c = 0 \) and \( \omega = \omega^{0} :=- \sqrt{\frac{2}{d} \int_{\delta}^{1} f(s)  ds} < 0 \). Moreover,
	\[
	P_0^{\prime}(1) = -\sqrt{-\frac{f^{\prime}(1)}{d}}.
	\]
	
	Consider the equilibrium point \( (1,0) \) of \eqref{s2}. A simple calculation shows that \( (1,0) \) is a saddle point, and hence by the theory of ODE (cf. \cite{n1}) there are exactly two trajectories of \eqref{s2} that approach \( (1,0) \) from \( q > 1 \); one of them, denoted by \( T_{c} \), has slope
	\[
	\frac{c - \sqrt{c^{2} - 4d f^{\prime}(1)}}{2d} < 0
	\]
	at \( (1,0) \), and the other has slope
	\[
	\frac{c + \sqrt{c^{2} - 4 df^{\prime}(1)}}{2d} > 0
	\]
	at \( (1,0) \). A part of \( T_{c} \) that lies in the set
	\[
	S := \{(q, p):   q \geq 1,  p \leq 0\}
	\]
	and contains \( (1,0) \) is a curve which can be expressed as \( p = P_{c}(q) \) for \( q \geq 1\) in a small right neighburhood of $1$. Thus \( P_{c}(q) \) satisfies
	\begin{equation}\label{s3a}
	P' = \frac{c}{d} - \frac{f(q)}{dP} \quad \mbox{ for } q>1, \quad P(1) = 0, \quad P^{\prime}(1) = \frac{c - \sqrt{c^{2} - 4 df^{\prime}(1)}}{2d}<0.
	\end{equation}

	{\bf Claim:} \( P_c(q) < 0 \) for all \( q \in (1, \delta] \). 
	
	Suppose, for contradiction, that there exists some \( q_0 \in (1, \delta] \) such that \( P_c(q) < 0 \) for all \( q \in (1, q_0) \), but \( P_c(q_0) = 0 \). 
	  Given \( f(q_0) < 0 \), and because \( P_c(q) \to 0^- \) as \( q \to q_0^- \), it follows that
	$\lim_{q \to q_0^-} \frac{f(q)}{P_c(q)} = \infty$.
	Consequently,  we have
	\[
	\lim_{q \to q_0^-} P_c'(q) = \frac{c}{d} - \lim_{q \to q_0^-} \frac{f(q)}{d P_c(q)} = \frac{c}{d} - \infty = -\infty,
	\]
	which implies $P_c(q_0)<P_c(q_0-\epsilon)<0$ for all small $\epsilon>0$. This contradiction proves the claim.

	\subsection{Some perturbed semi-waves} 	Given \( c \in \mathbb{R} \) and a function \( h \) satisfying \eqref{gm}, we consider the  problem
	\begin{align}\label{s6}
		\frac{dP}{dq} = P' = \frac{c}{d} - \frac{h(q)}{dP} \quad \mbox{for } q >\xi, \quad P(\xi) = 0, \quad P'(\xi) = \frac{c - \sqrt{c^2 - 4d f'(\xi)}}{2d}<0.
	\end{align}
	Similar to the analysis in  Subsection 4.1 above, we can show that $P(q)$ is defined for $q\in [\xi, \delta]$ and $P(q)<0$ for $q\in (\xi, \delta]$.	
	
	\begin{lemma}  \label{l1}
		Let \( P_c(q)=P_c(q;h) \) be the solution to \eqref{s6} with \( h \) satisfying \eqref{gm}.  Then the following conclusions hold.
		\begin{enumerate}
			\item[(i)] For any \( c_1 < c_2 \),  
			\[
			P_{c_1}(q) < P_{c_2}(q)<0 \quad \text{for all } q \in (\xi, \delta];
			\]
			moreover, for any \( \bar{c} \in \mathbb{R} \),  
			\[
			\lim_{c \to \bar{c}} P_c(q) = P_{\bar{c}}(q) \quad \text{uniformly on } [\xi, \delta].
			\]
			
			\item[(ii)] Fix \( c\in \mathbb{R}\), and let \( f_1, f_2 \) satisfy \eqref{gm} with  \( \xi=m_1 \) and \( \xi=m_2 \) respectively, where \( 0 < m_1,  m_2 < \delta \). If \( f_1(u) < f_2(u) \) for all \( u \in (0, \delta) \), then  
			\[
		m_1<m_2,\ 	P_c(q;f_2) < P_c(q;f_1) \quad \text{for all } q \in [m_2, \delta];
			\]
			moreover, if \( f_n \to f \) in \( C^1([0, \delta]) \) as $n\to\infty$, where \( f_n \) and \( f \) satisfy \eqref{gm} with  \( \xi=m_n \) and \( \xi=m \), respectively, then
			\[
			\lim_{n\to\infty} m_n=m,\ \lim_{n \to \infty} P_c(q; f_n) = P_c(q; f) \quad \text{locally uniformly in } (m, \delta].
			\]
		\end{enumerate}
	\end{lemma}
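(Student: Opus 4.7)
The plan is to read $P_c(\cdot;h)$ as the downward branch of the stable manifold of the saddle $(\xi,0)$ of the planar system $q'=p$, $p'=(cp-h(q))/d$, and to derive both the monotonicity and the continuity statements by combining (a) continuous dependence of this stable manifold on the parameters, which is standard saddle-manifold theory, with (b) a touching argument in the reduced singular ODE \eqref{s6}.

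\emph{Part (i).} For monotonicity in $c$, I first note that differentiating $P'(\xi)=(c-\sqrt{c^2-4dh'(\xi)})/(2d)$ in $c$ produces a strictly positive number, because $h'(\xi)<0$ forces $\sqrt{c^2-4dh'(\xi)}>|c|$. Thus for $c_1<c_2$ the two functions $P_{c_i}$ leave $(\xi,0)$ with $P'_{c_1}(\xi)<P'_{c_2}(\xi)<0$, so $P_{c_1}<P_{c_2}$ on a right neighbourhood of $\xi$. If the two graphs were to first touch at some $q_\ast\in(\xi,\delta]$, then $P_{c_1}(q_\ast)=P_{c_2}(q_\ast)<0$ and $P'_{c_1}(q_\ast)\geq P'_{c_2}(q_\ast)$; subtracting the two instances of \eqref{s6} at $q_\ast$ (where the singular term $h(q_\ast)/(dP_{c_i}(q_\ast))$ is common) yields $P'_{c_1}(q_\ast)-P'_{c_2}(q_\ast)=(c_1-c_2)/d<0$, a contradiction. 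This gives the strict ordering on $(\xi,\delta]$. For continuity in $c$, I would use smooth dependence of the saddle stable manifold on $c$ to obtain $C^2_{\mathrm{loc}}$ convergence on $(\xi,\delta]$, and then upgrade to uniform convergence on $[\xi,\delta]$ via the linearised expansion $P_c(q)=P'_c(\xi)(q-\xi)+O((q-\xi)^2)$ near $\xi$ with constants continuous in $c$.

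\emph{Part (ii).} The inequality $m_1<m_2$ is immediate from $f_2(m_1)>f_1(m_1)=0$ combined with the monostable sign pattern of $f_2$. For the comparison of $P_c(q;f_1)$ and $P_c(q;f_2)$ on $[m_2,\delta]$, I would combine the boundary values at $q=m_2$, where $P_c(m_2;f_2)=0$ while $P_c(m_2;f_1)<0$ (by the claim established in Subsection 4.1 applied to $f_1$, since $m_2\in(m_1,\delta]$), with a touching argument as in part (i). If the two curves were to meet at a first point $q_\ast\in(m_2,\delta]$ with common negative value $P_c(q_\ast)$, then subtracting the two ODEs produces
\[
P'_c(q_\ast;f_1)-P'_c(q_\ast;f_2)=\frac{f_2(q_\ast)-f_1(q_\ast)}{d\,P_c(q_\ast)},
\]
whose sign is fixed by the hypothesis $f_1<f_2$ together with $P_c(q_\ast)<0$; coupled with the endpoint ordering at $q=m_2$, this rules out such a touching and propagates the strict ordering of the two curves throughout $[m_2,\delta]$. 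For the convergence statement as $f_n\to f$ in $C^1([0,\delta])$, $m_n\to m$ follows from the implicit function theorem applied at $m$ (using $f'(m)<0$), and joint smooth dependence of the saddle stable manifold on the parameters $(c,f_n)$ then gives $P_c(\cdot;f_n)\to P_c(\cdot;f)$ in $C^2_{\mathrm{loc}}((m,\delta])$.

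The main obstacle will be the singular endpoint $q=\xi$. Neither the uniform-convergence portion of (i) nor the moving-endpoint convergence in (ii) can be derived directly from the reduced ODE \eqref{s6}, which is singular at $\xi$; instead, one must work in the full phase plane $(q,p)$, where the hyperbolicity of $(\xi,0)$ yields quantitative continuous dependence of the stable manifold on its parameters. The passage from $C^2_{\mathrm{loc}}$ convergence on $(\xi,\delta]$ to uniform convergence on $[\xi,\delta]$, and the tracking of the moving zero $m_n\to m$ in (ii), both rest on this linearised saddle picture, which supplies the decay rate of $P_c(q)$ as $q\to\xi^+$ needed to close the uniform estimates.
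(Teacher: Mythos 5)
Your comparison (touching) arguments in both parts coincide with the paper's own proof: the monotonicity of the boundary slope $P'(\xi)$ in $c$, the first-touching contradiction obtained by subtracting the two instances of \eqref{s6} at the touching point, and the endpoint ordering $P_c(m_2;f_1)<0=P_c(m_2;f_2)$ are exactly the steps used there. Note that what this argument yields -- for you and for the paper's proof alike -- is $P_c(q;f_1)<P_c(q;f_2)$ on $(m_2,\delta]$, which is also the inequality invoked later in Corollary \ref{l2}; the reversed inequality printed in the statement of the lemma is a typo, so you should state the direction explicitly rather than leave ``the strict ordering'' implicit. Where you genuinely diverge is in the continuity and convergence claims. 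The paper stays elementary: the monotonicity in the parameter already proved gives a monotone pointwise limit; a bound of the form $|h(q)/P_c(q)|\le |h(q)/P_{c_0}(q)|$ with a fixed reference speed $c_0$, whose right-hand side extends continuously to $q=\xi$, gives uniform $C^1$ bounds; Arzel\`a--Ascoli plus uniqueness of solutions of \eqref{s6} identifies the limit as $P_{\bar c}$ (respectively $P_c(\cdot;f)$); and for a general sequence $f_n\to f$ in part (ii) the paper sandwiches $f_n$ between $f_n\pm\epsilon_n$ to reduce to monotone approximations. You instead invoke continuous/smooth dependence of the stable manifold of the saddle $(\xi,0)$ on the parameters ($c$, and the vector field in the $C^1$ topology), which is legitimate and handles the singular endpoint and the moving zero $m_n\to m$ in one stroke, at the price of citing the parametrized stable manifold theorem; the paper's route needs only the comparison principle it has already established plus compactness and uniqueness. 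One small caution in your part (i): since $h$ is only $C^1$, the expansion $P_c(q)=P_c'(\xi)(q-\xi)+O((q-\xi)^2)$ with constants uniform in $c$ claims more regularity than is available; but you do not need it -- a Lipschitz estimate $|P_c(q)-P_{\bar c}(q)|\le C(q-\xi)$ with $C$ locally uniform in $c$ (which follows from the uniform slope bound, or from your manifold dependence) already upgrades locally uniform convergence on $(\xi,\delta]$ to uniform convergence on $[\xi,\delta]$.
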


	\begin{proof} (i) When \( c_1 < c_2 \), since
		\[
		P_c'(\xi) = \frac{c - \sqrt{c^2 - 4d h'(\xi)}}{2d}=\frac{2h'(\xi)}{c + \sqrt{c^2 - 4d h'(\xi)}} \mbox{ is always increasing in } c,
		\]
		we obtain \( P_{c_1}'(\xi) < P_{c_2}'(\xi) \).
		From \eqref{s6} and the condition $c_1 < c_2$, we obtain
		\begin{equation}\label{b1}
			P_{c_1}'(q) < \frac{c_2}{d} - \frac{h(q)}{d P_{c_1}(q)}, \quad q \in (\xi, \delta].
		\end{equation}
		This implies that, as $q$ increases from $\xi$, the curve $p = P_{c_1}(q)$ remains below $p = P_{c_2}(q)$ in the $(q, p)$-plane.  
		To verify this, suppose for contradiction that there exists a first point $q^0 \in (\xi, \delta]$ such that $P_{c_1}(q^0) = P_{c_2}(q^0)$ and $P_{c_1}(q) < P_{c_2}(q)$ for all $q \in (\xi, q^0)$. Then necessarily $P_{c_1}'(q^0) \geq P_{c_2}'(q^0)$. However, \eqref{b1} implies that at $q = q^0$,
		\[
		P_{c_1}'(q^0) < \frac{c_2}{d} - \frac{h(q^0)}{d P_{c_1}(q^0)} = P_{c_2}'(q^0).
		\]
		This contradiction shows that $P_{c_1}(q)<P_{c_2}(q)$ for $q\in (\xi,\delta]$. Thus $P_c(q)$ is strictly increasing in $c$ for each fixed $q \in (\xi, \delta]$.
		
		For any $\bar{c} \in \mathbb{R}$, as $c \uparrow \bar{c}$, the monotonicity implies that  $P_{c}(q)$ converges monotonically to some function 
		$R(q)$  in $[\xi, \delta]$. For $c \leq  \bar{c} $,  the inequality $P_c(q) \leq  P_{\bar{c}}(q)<0$ for $q\in (\xi,\delta]$ gives  
		\[
		0> \frac{h(q)}{d P_c(q)}  \geq  \frac{h(q)}{d P_{\bar{c}}(q)} \geq M_0:=\inf_{q\in [\xi,1]} \frac{h(q)}{d  P_{\bar{c}}(q)}\in (-\infty, 0).
		\]
		This implies, by \eqref{s6},  that \( \|P_c\|_{C^1([\xi, \delta])} \leq M_1 \) for all $c\leq \bar c$ and some positive constant $M_1$. By the Arzelà--Ascoli theorem, as \( c \to \bar{c} \), a subsequence of \( \{P_c\}_{n=1}^{\infty} \), still denoted by \( \{P_c\} \), converges to some function  in \( C^1([\xi, \delta]) \). This function necessarily coincides with $R(q)$. Hence the convergence holds as $c\to\bar c$ and
		 \( R(q) \) satisfies \eqref{s6} with \( R(\xi) = 0 \) and 
		$R'(\xi) = \frac{\bar{c} - \sqrt{\bar{c}^2 - 4d h'(\xi)}}{2d} < 0$.
		By the uniqueness of the solution to \eqref{s6}, we conclude that \( R(q) \equiv P_{\bar{c}}(q) \). By a similar argument, the same conclusion holds when \( c \downarrow \bar{c} \). The proof of part (i) is now complete.
			
		(ii)	Since $f_1(u)<f_2(u)$ for $u\in (0, \delta]$, we have $m_1<m_2$. Moreover, by (i),
		\begin{equation}\label{s3}
			P_{c}^{\prime}(q;f_1) < \frac{c}{d} - \frac{f_2(q)}{dP_{c}(q;f_1)} \mbox{ for } q\in (m_2, \delta], \quad P_{c}(m_2;f_1) <0= P_{c}(m_2;f_2).
		\end{equation}
		By the comparison argument used in part (i) above, we obtain \(P_{c}(q;f_1) < P_{c}(q;f_2)\) for all \(q \in [m_2, \delta]\). 
				
		Now, suppose $f_n\to f$ in $C^1([0,\delta])$ as \(n \to \infty\) with \(f_n\leq f\). Then, $m_n\leq m$, $m_n\to m$ and  \(P_c^n(q) := P_c(q;f_n)\leq  P_c(q;f)\) holds for all \(q \in [m, \delta]\) and \(n \geq 1\). 
		As in the proof of (i), we can show that a subsequence of \( P_c^n(q) \), still denoted by \( P_c^n(q) \), converges to some function \( S(q) \) in \( C^1([m, \delta]) \) as \( n \to \infty \). It then follows that \( S(q) \) satisfies  \eqref{s6} with \( S(m) = 0 \) and $S(q)\leq P_c(q;f)$ for $q\in [m,\delta]$. Thus, 
		\begin{align*}
			S'(m) \leq  P_c'(q;f)=\frac{c - \sqrt{c^2 - 4d f'(m)}}{2d}<0.
		\end{align*}
		As there is only one such solution, \(S(q)\) must coincide with \(P_c(q;f)\). This implies that \(P_c^n(q) \to P_c(q;f)\) as \(n \to \infty\) locally uniformly in \((m, \delta]\).
		
		Similarly, when  $f_n\to f$ in $C^1([0,\delta])$ as \(n \to \infty\) with \(f_n\geq f\), we obtain \(P_c^n(q) \to P_c(q;f)\) locally uniformly in \((m, \delta]\). 
		
		Now for a general sequence $f_n$ satisfying $f_n\to f$ in $C^1([0,\delta])$ as \(n \to \infty\), we can find $\delta_0>0$ small such that $m_n>2\delta_0$
		for all large $n$, say $n\geq n_0$. Then, by passing to a subsequence of $f_n$, we can find $\epsilon_n>0$, $\epsilon_n\to 0$ as $n\to\infty$ such that
		\[
		\bar f_n:=f_n+\epsilon_n\geq f,\ \underline f_n:=f_n-\epsilon_n\leq f,\ 2\delta_0<\underline m_n<m_n<\bar m_n \mbox{ for $n\geq n_0$ in $[2\delta_0, \delta]$},
		\]
		where $\underline m_n, \overline m_n$ are the unique zeros of $\underline f_n$ and $\bar f_n$, respectively. Our earlier analysis implies
		\[
		P_c(q; \underline f_n)\leq P_c(q; f_n)\leq P_c(q;\bar f_n) \mbox{ for } n\geq n_0,\ q\in [\bar m_n, \delta],
		\]
		and as $n\to\infty$,
		\[
		P_c(q; \underline f_n)\to P_c(q; f),\ P_c(q; \bar f_n)\to P_c(q; f) \mbox{ locally uniformly in } (m, \delta].
		\]
		It follows that $P_c(q;  f_n)\to P_c(q; f) \mbox{ locally uniformly in } (m, \delta]$ as $n\to\infty$. This completes the proof.
	\end{proof}

	\subsection{Proof of Theorem~\ref{p1}}
		 We first prove the existence and uniqueness of the semi-wave \( q_c \), as well as its monotonicity with respect to \( c \).
		
		Consider the following initial value problem
		\begin{equation}\label{q(x)}
				q' = p(q) \mbox{ for } x > 0, \
				q(0) = \delta,
		\end{equation}
		where \( p(q) \in C^1([1, \delta]) \) is the solution of  \eqref{s3a}, which is known to satisfy $p(q)<0$ for $q\in (1, \delta]$ and $p(1)=0$. It follows easily that
		$q(x)$ is defined for all $x>0$, and
		\[
		  q'(x)<0,\ 1<q(x)\leq \delta \mbox{  over } [0, \infty) \mbox{ with } q(x)\to 1 \mbox{ as } x\to\infty.
		\]
		For \( x >0 \), differentiating \( q'(x) = p(q(x)) \) yields
		\[
		q''(x) = \frac{d p}{d q}(q(x)) \cdot q'(x) = \frac{d p}{d q}(q(x)) \cdot p(q(x)) = \frac{1}{d} \left[ c p(q(x)) - f(q(x)) \right],
		\]
		which implies
		\begin{equation*}\label{ss1}
			d q'' - c q' + f(q) = 0.
		\end{equation*}
		Thus, \( q \) satisfies \eqref{eq:part1}.
		
		To show uniqueness, let $q(x)$ be any solution of \eqref{eq:part1}. Then the analysis in subsection 4.1 shows that $q'=p=P_c(q)$. Thus $q$ solves \eqref{q(x)}, and by the 
uniqueness of the solution to the initial value problem \eqref{q(x)}, we conclude that \eqref{eq:part1} has a unique solution.
		
		If \(c_1 < c_2\), then Lemma \ref{l1} implies \(P_{c_1}(q) < P_{c_2}(q)\) for all \(q \in (1, \delta]\). 
	Thus \(q'_{c_1}(0) = P_{c_1}(\delta) < q'_{c_2}(0) = P_{c_2}(\delta)\). 
		By continuity, \(q_{c_1}(x) < q_{c_2}(x)\) for \(x \in (0, \varepsilon_0)\) for some \(\varepsilon_0 > 0\)  sufficiently small. 
		Suppose there exists \(x_0 > 0\) such that \(q_{c_1}(x) < q_{c_2}(x)\) for \(x \in (0, x_0)\) and \(q_{c_1}(x_0) = q_{c_2}(x_0)\). 
		Then \(q'_{c_1}(x_0) \geq q'_{c_2}(x_0)\). 
		However, \(q'_{c_1}(x_0) = P_{c_1}(q_{c_1}(x_0)) < P_{c_2}(q_{c_2}(x_0)) = q'_{c_2}(x_0)\), a contradiction. 
		Thus \(q_{c_1}(x) < q_{c_2}(x)\) for all \(x \in (0, \infty)\).
		
		For any $\bar{c} \in \mathbb{R}$, as $c \uparrow \bar{c}$, the monotonicity implies that $q_c(x)$ is nondecreasing in $c$ for $x \in [0, \infty)$. Given $1 \leq q_c(x) \leq \delta$, there exists a limit function $Q(x)$ with $1 \leq Q(x) \leq \delta$ such that $q_c(x) \to Q(x)$ as $c\rightarrow \bar{c}$ for $x\in [0, \infty)$.  
		Applying the Sobolev embedding theorem and $L^p$ interior estimates to \eqref{eq:part1}, for large $p > 1$ and $\alpha \in (0,1)$, we obtain
		\[
		\|q_c\|_{C^{1+\alpha}([M, M+1])} \leq C_1 \|q_c\|_{W^{2,p}([M,M+1])} \leq C_2,
		\]
		with constants $C_1, C_2$ independent of $c$ and $M$. Combining with condition (\textbf{f}) and Schauder estimates, we get $\|q_c\|_{C^{2+\alpha}([0, M])} \leq C(M)$, where $C(M)$ depends on $M$ but not $c$. Hence  $q_c \to Q$ in $C^{2}_{\mathrm{loc}}([0, \infty))$ as $c\uparrow \bar{c}$	and $Q$ satisfies
		\begin{equation*}
			\begin{cases}
				d Q'' - \bar{c} Q' + f(Q) = 0, & x > 0, \\
				Q(x) > 1,\quad Q'(x) < 0, & x \in [0, \infty), \\
				Q(0) = \delta, \quad
				Q(\infty) = 1.
			\end{cases}
		\end{equation*}
		By the uniqueness of the solution to \eqref{eq:part1} with $c=\bar c$, we conclude $Q = q_{\bar{c}}$. The same conclusion holds for $c \downarrow \bar{c}$.  	
		Thus, \( q_c \to Q \) in \( C^2_{\mathrm{loc}}([0, \infty)) \) as \( c \to \bar{c} \). This convergence, together with the monotonicity of \( q_c \) with respect to \( c \) and the fact that \( q_c(\infty) = 1 \), implies
		\begin{align*}
			\lim_{c \to \bar{c}} \| q_c - q_{\bar{c}} \|_{L^\infty([0, \infty))} = 0.
		\end{align*}

		We next prove the existence and uniqueness of the semi-wave pair $(c, q_c)$ satisfying \( q_c'(0) = \dfrac{c\delta}{d} \).
		
		By \eqref{s3a}, we have $	P_c(\delta) = \frac{c}{d}(\delta - 1) - \int_{1}^{\delta} \frac{f(s)}{d \, P_c(s)} \, ds$.
		Define 
		\[
		\xi(c) := P_c(\delta) - \frac{\delta}{d} c = q_c'(0) - \frac {\delta}{d}c.
		\]
		Substituting the expression for \( P_c(\delta) \) into this definition yields 
		\begin{align*}
			\xi(c) = -\frac{c}{d}  - \int_{1}^{\delta} \frac{f(s)}{d \, P_c(s)} \, ds.
		\end{align*}
		This expression indicates that $\xi(c)$ is strictly decreasing for \( c \in \mathbb{R} \), since  \( f(s)<0 \) on \( (1, \delta) \) and  \( P_c(s) < 0 \) is increasing in \( c \) (see Lemma~\ref{l1}).
		
	Clearly, \( \xi(0) = -\int_{1}^{\delta} \frac{f(s)}{d \, P_0(s)} \, ds < 0 \), and with \( c_0 := d \, P_0(\delta) < 0 \), we have
		\[
		\xi(c_0) = -P_0(\delta) - \int_{1}^{\delta} \frac{f(s)}{d \, P_{c_0}(s)} \, ds > -P_0(\delta) - \int_{1}^{\delta} \frac{f(s)}{d \, P_0(s)} \, ds = 0,
		\]
		where the last equality follows from \eqref{s3a} with $c=0$.
		Since \( \xi(c) \) is continuous and strictly decreasing, from \( \xi(c_0) > 0 > \xi(0) \) we immediately see that there exists a unique \( c^* \in (c_0, 0) \) such that \( \xi(c^*) = 0 \).

		Next, we establish the monotonicity of \( c^*(\delta) \) with respect to \( \delta \). To emphasize the dependence of \( \xi(c) \) on \( \delta \), we denote it by \( \xi^\delta(c) \). Since both \( f \) and \( P_c \) are negative on \( (1, \delta) \), it follows from the definition of \( \xi^\delta(c) \) that \( \xi^\delta(c) \) is strictly decreasing with respect to \( \delta \). Suppose \( 1 < \delta_1 < \delta_2 \), and let \( c^*(\delta_i) \) be the unique value satisfying \( \xi^{\delta_i}(c^*(\delta_i)) = 0 \) for \( i = 1, 2 \). Then,
		\[
		\xi^{\delta_2}(c^*(\delta_1)) < \xi^{\delta_1}(c^*(\delta_1)) = 0.
		\]
		Since \( \xi^{\delta_2}(c) \) is strictly decreasing in \( c \) and satisfies \( \xi^{\delta_2}(c^*(\delta_2)) = 0 \), we deduce that \( c^*(\delta_1) > c^*(\delta_2) \). Therefore, \( c^*(\delta) \) is strictly decreasing for \( \delta > 1 \).

		Since \( c^*(\delta) < 0 \) for all \( \delta > 1 \), we see that \( \lim_{\delta \to 1^+} c^*(\delta) =: c^0 \) exists with \( c^0 \leq 0 \). 
		For any fixed \( c < 0 \), we observe that
		\[
		\lim_{\delta \to 1^+}\frac{f(s)}{d P_c(s)}=\frac{f'(1)}{dP_c'(1)}\in (0,\infty),\ \ \lim_{\delta \to 1^+} \int_{1}^{\delta} \frac{f(s)}{d P_c(s)} \, ds = 0,\]
		 and hence
		 \[
		   \lim_{\delta \to 1^+} \xi^\delta(c) = \lim_{\delta \to 1^+} \left( -\frac{c}{d}  - \int_{1}^{\delta} \frac{f(s)}{d P_c(s)} \, ds \right) = -\frac{c}{d}  < 0.
		\]
		In particular, it follows that \( \lim_{\delta \to 1^+} \xi^\delta(c^0) = -\frac{c^0}{d} \). If \( c^0 < 0 \), then by the continuity of \( \xi^\delta(c) \) in both \( \delta \) and \( c \), there exists \( \varepsilon > 0 \) such that 
		\[
		\xi^\delta(c) < -\frac{c^0}{2d} < 0 \quad \text{for all } \delta \in (1, 1+\varepsilon) \text{ and } c \in [c^0 - \varepsilon, c^0 + \varepsilon].
		\]
		However, this contradicts with \( \xi^\delta(c^*(\delta)) = 0 \) and \( \lim_{\delta \to 1^+} c^*(\delta) = c^0 \).  Therefore, we must have \( c^0 = 0 \).
		
		Now all the conclusions in the theorem are proved, and  the proof is thus completed. \qed

	\subsection{Proof of Corollary \ref{l2}}  
		(i)  Theorem \ref{p1} implies that \eqref{eq:part1} with \( f \) replaced by \( f_{\varepsilon}^i \), $i=1,2$, admits a unique solution, denoted by \( q_c^i(x;\varepsilon) \), satisfying \[
		\mbox{\(\lim_{x \to \infty} q_c^1(x;\varepsilon) = 1 - \varepsilon\),\ \(\lim_{x \to \infty} q_c^2(x;\varepsilon) = 1 + \varepsilon\). }
		\]
		
		Since \( f_{\varepsilon}^1(s) < f(s) < f_{\varepsilon}^2(s) \) for \( s > 0 \), Lemma \ref{l1} implies \( P_{c}(q; f_\varepsilon^1) < P_{c}(q; f_\varepsilon^2) \) for all \( q \in [1+\varepsilon, \delta]\).  
		It follows that \((q_c^1)'(0;\varepsilon) = P_{c}(\delta; f_\varepsilon^1) < (q_c^2)'(0;\varepsilon) = P_{c}(\delta; f_\varepsilon^2) \), and hence 
		 \( q_c^1(x;\varepsilon) < q_c^2(x;\varepsilon) \) for \( x \in (0, \varepsilon^0) \) with \( \varepsilon^0 > 0 \)  sufficiently small.  
		Suppose there exists \( x^0 > 0 \) such that \( q_c^1(x;\varepsilon) < q_c^2(x;\varepsilon) \) for \( x \in (0, x^0) \) and \( q_c^1(x^0;\varepsilon) = q_c^2(x^0;\varepsilon) \).  
		Then \( (q_c^1)'(x^0;\varepsilon) \geq (q_c^2)'(x^0;\varepsilon) \).  
		However, 
		\[
		(q_c^1)'(x^0;\varepsilon) = P_{c}(q_c^1(x^0;\varepsilon); f_\varepsilon^1) < P_{c}(q_c^1(x^0;\varepsilon); f_\varepsilon^2) = (q_c^2)'(x^0;\varepsilon),
		\] 
		a contradiction.  
		Thus, \( q_c^1(x;\varepsilon) < q_c^2(x;\varepsilon) \) for all \( x \in (0, \infty) \).  
		Similarly, we can prove that \( q_c^1(x;\varepsilon) \) is nondecreasing in \( \varepsilon \) and \( q_c^2(x;\varepsilon) \) is nonincreasing in \( \varepsilon \) for \( x \in [0, \infty) \) with \( q_c^1(x;\varepsilon) <q_c(x)< q_c^2(x;\varepsilon) \) for all \( x \in (0, \infty) \). 
		
		Since \( 1-\varepsilon \leq q_c^1(x;\varepsilon) \leq \delta \), the above monotonicity property implies the existence of a function \( Q_1(x) \) with \( 1 \leq Q_1(x) \leq \delta \) such that \( q_c^1(x;\varepsilon) \to Q_1(x) \) as \( \varepsilon \to 0 \) for \( x \in [0, \infty) \).  
		Applying the Sobolev embedding theorem and \( L^p \) interior estimates to \eqref{eq:part1}, for large \( p > 1 \) and \( \alpha \in (0,1) \), we obtain  
		\[
		\| q_c^1(\cdot;\varepsilon) \|_{C^{1+\alpha}([M, M+1])} \leq C_1 \| q_c^1(\cdot;\varepsilon) \|_{W^{2,p}([M,M+1])} \leq C_2,
		\]  
		with constants \( C_1, C_2 \) independent of \( \varepsilon \) and \( M \). Combining with condition $({\bf f_m})$ and Schauder estimates, we get \( \| q_c^1(\cdot;\varepsilon) \|_{C^{2+\alpha}([0, M])} \leq C(M) \), where \( C(M) \) depends on \( M \) but not \( \varepsilon \). Hence \( q_c^1(\cdot;\varepsilon) \to Q_1 \) in \( C^{2}_{\mathrm{loc}}([0, \infty)) \) as \( \varepsilon \to 0 \) and \( Q_1 \) satisfies  
		\[
		d Q_1'' - c Q_1' + f(Q_1) = 0 \quad \text{for} \quad x > 0
		\]  
		with \( Q_1'(x) \leq 0 \) for \( x \in [0, \infty) \). Then, \( Q_1^* := \lim_{x\to \infty} Q_1(x) \) exists and \( Q_1^* \geq 1 \).  
		As in the proof of Theorem \ref{p1}, we have \( \lim_{x \to \infty} Q_1'(x) = 0 \), implying \( \lim_{x \to \infty} Q_1''(x) = -\frac{1}{d} f(Q_1^*) \). Since \( \lim_{x \to \infty} Q_1''(x) = 0 \), it follows that \( f(Q_1^*) = 0 \), which implies \( Q_1^* = 1 \). Thus,  
		\begin{equation*}
			\begin{cases}
				d Q_1'' - c Q_1' + f(Q_1) = 0, & x > 0, \\
				Q_1(x) \geq 1, \quad Q_1'(x) \leq 0, & x \in [0, \infty), \\
				Q_1(0) = \delta, \  Q_1(\infty) = 1.
			\end{cases}
		\end{equation*}  
		By uniqueness of the solution to \eqref{eq:part1}, we deduce \( Q_1 = q_{c} \). Thus,  
		\[
		\lim_{\varepsilon \to 0} q_c^1(x;\varepsilon) = q_c(x) \quad \text{in} \quad C^2_{\mathrm{loc}}([0, \infty)).
		\]  
		The same conclusion holds for \( q_c^2(x;\varepsilon) \).

			The above convergence, together with the monotonicity of \( q_c^i(x;\varepsilon) \) in \( \varepsilon \) and the limits \( q_c^1(\infty;\varepsilon) = 1 - \varepsilon \), \( q_c^2(\infty;\varepsilon) = 1 + \varepsilon \), yields  
			\[
			\lim_{\varepsilon \to 0} \| q_c^i(\cdot;\varepsilon) - q_c \|_{L^\infty([0, \infty))} = 0, \quad i = 1, 2.
			\]
			This completes the first part of the proof.

		(ii) Theorem \ref{p1} implies that for each \( i = 1, 2 \), the problem \eqref{eq:part2} with \( f \) replaced by \( f_{\varepsilon}^i \) admits a unique solution pair \( (c,q)=(c_i^*(\varepsilon), q_i^*(x;\varepsilon)) \), and \(\lim_{x \to \infty} q_1^*(x;\varepsilon) = 1 - \varepsilon\), \(\lim_{x \to \infty} q_2^*(x;\varepsilon) = 1 + \varepsilon\).

		Let us denote the solution of \eqref{s3a} with \( f \) replaced by \( f_{\varepsilon}^i \)  by $P_{c}^{i}(\delta;\varepsilon)$ for $i=1,2$.
		For any \( c \in \mathbb{R} \), Lemma~\ref{l1} implies
		\[
		P_{c}^{1}(\delta;\varepsilon) < P_{c}(\delta) < P_{c}^{2}(\delta;\varepsilon).
		\]
		Direct calculation yields
		\[
		\xi_{\varepsilon}^1(c) := P_{c}^{1}(\delta;\varepsilon) - \frac{\delta}{d}c 
		\;<\; 
		\xi(c) := P_{c}(\delta) - \frac{\delta}{d}c 
		\;<\; 
		\xi_{\varepsilon}^2(c) := P_{c}^{2}(\delta;\varepsilon) - \frac{\delta}{d}c.
		\]
		Since \(\xi(c^{*}) = 0\), we have \(\xi_{\varepsilon}^1(c^{*}) < 0 < \xi_{\varepsilon}^2(c^{*})\). Given that \(\xi_{\varepsilon}^i(c)\) is strictly decreasing in \(c\) for \(i = 1, 2\) (as established in the proof of Proposition~\ref{p1}), we conclude that 
		\[
		c_1^{*}(\varepsilon) < c^* < c_2^{*}(\varepsilon).
		\]
		Moreover, Lemma~\ref{l1} implies \(P_{c}^{i}(\delta;\varepsilon)\) is nonincreasing in \(\varepsilon\) for \(i = 1, 2\), a similar monotonicity argument shows that \(c_{2}^{*}(\varepsilon)\) is increasing and \(c_{1}^{*}(\varepsilon)\) is decreasing in \(\varepsilon\). The continuity of \(\xi_{\varepsilon}^i(c)\) in \(\varepsilon\) (for \(i = 1, 2\)) and the uniqueness of \(c^{*}\) as the solution to \(\xi(c) = 0\) now imply 
		\[
		\lim_{\varepsilon \to 0} c_{i}^{*}(\varepsilon) = c^*, \quad i = 1, 2.
		\]

		Since \( c_{1}^{*}(\varepsilon) \) decreases with \(\varepsilon\), for \(\varepsilon_1 > \varepsilon_2\), Theorem~\ref{p1} and Corollary~\ref{l2}~(i) imply
		\[
		q_1^*(x;\varepsilon_1) = q_{c^*(\varepsilon_1)}^1(x;\varepsilon_1) < q_{c^*(\varepsilon_2)}^1(x;\varepsilon_1) < q_{c^*(\varepsilon_2)}^1(x;\varepsilon_2) = q_1^*(x;\varepsilon_2) \quad\mbox{for } x > 0.
		\]
		Hence \( q_1^*(x;\varepsilon) \) is decreasing in \( \varepsilon > 0 \). Since \( 1 - \varepsilon \leq q_1^*(x;\varepsilon) \leq \delta \), there exists \( Q^1(x) \) with \( 1 \leq Q^1(x) \leq \delta \) such that \( q_1^*(x;\varepsilon) \to Q^1(x) \) as \(\varepsilon \to 0\) for \( x \in [0,\infty) \).  
	By an argument analogous to that in~(i), we have \( q_1^*(\cdot;\varepsilon) \to Q^1 \) in \( C^{2}_{\mathrm{loc}}([0,\infty)) \) as \( \varepsilon \to 0 \), where \( Q^1 \) satisfies
		\begin{equation*}
			\begin{cases}
				d Q_1'' - c^* Q_1' + f(Q_1) = 0, & x > 0, \\
				Q_1(x) \geq 1, \quad Q_1'(x) \leq 0, & x \geq 0, \\
				Q_1(0) = \delta, \quad  Q_1(\infty) = 1.
			\end{cases}
		\end{equation*}
		By uniqueness, \( Q^1 = q_{c^*} \). Therefore,
		\[
		\lim_{\varepsilon \to 0} q_i^*(\cdot;\varepsilon) = q^* \quad \text{in } C^{2}_{\mathrm{loc}}([0,\infty)), \quad i=1,2.
		\]
		This convergence, combined with the monotonicity of \( q_i^*(\cdot;\varepsilon) \) in \(\varepsilon\) and the limits \( q_1^*(\infty) = 1 - \varepsilon \), \( q_2^*(\infty) = 1 + \varepsilon \), implies
		\[
		\lim_{\varepsilon \to 0} \| q_i^*(\cdot;\varepsilon) - q^* \|_{L^\infty([0,\infty))} = 0, \quad i=1,2.
		\]
		This completes the proof. \qed

\subsection{Proof of Lemma \ref{l4}}
		We present the proof for the case \( \bar c_0 > c^* \), as the case \( \underline c_0 < c^* \) follows by a similar argument.
		
		Since \( \bar c_1 < \bar c_0 \), Lemma~\ref{l1} ensures that $\bar q_1'(0) = P_{\bar c_1}(\delta) < P_{\bar c_0}(\delta) = \bar q_0'(0)$.
		It then follows that
		\begin{align*}
			\bar c_2 = \frac{d}{\delta} \bar q_1'(0) + \frac{1}{M+1} < \frac{d}{\delta} \bar q_0'(0) + \frac{1}{M} = \bar c_1\quad {\rm and}\quad		\bar c_1=\frac{d}{\delta} \bar q_0'(0) + \frac{1}{M} > \frac{d}{\delta} \bar q_0'(0) > \frac{d}{\delta} \bar q_1'(0).
		\end{align*}
		By induction, the sequence \( \{\bar c_n\} \) is strictly decreasing and satisfies
		\[
		\bar c_n > \frac{d}{\delta} \bar q_n'(0) \ = \frac{d}{\delta} q_{\bar c_n}'(0) \quad \text{for all } n \geq 0.
		\]
		Since \( \xi(c^*) = 0 \), and \( \xi(c) = q_c'(0) - \frac{\delta}{d} c \) is strictly decreasing in \( c \in \mathbb{R} \), we conclude that \( \bar c_n > c^* \) for all \( n \geq 0 \).
		
		Because \( \{\bar c_n\} \) is decreasing and bounded from below by $c^*$, there exists a constant \( C \geq c^* \) such that \( \bar c_n \to C \) as \( n \to \infty \). By Theorem~\ref{p1},  \( \bar q_n \to Q \) in \( C^{2}_{\mathrm{loc}}([0, \infty)) \), with \( Q \) satisfying
		\begin{equation}\label{eq:limit}
			\begin{cases}
				dQ'' - C Q' + f(Q) = 0, & x \in (0, \infty), \\
				Q(0) = \delta, \quad Q(\infty) = 1. \\
			\end{cases}
		\end{equation}
		Furthermore,
		\[
		\frac{d}{\delta} Q'(0) = \lim_{n \to \infty} \frac{d}{\delta} \bar q_n'(0) = \lim_{n \to \infty} \left( \bar c_{n+1} - \frac{1}{M+n} \right) = C.
		\]
		By the uniqueness of \( c^* \) established in Theorem~\ref{p1}~(ii), we deduce   \( C = c^* \) and \( Q = q^*=q_{c^*} \).
		
		The second identity in \eqref{2.14a} then follows from the monotonicity of \( \bar q_n \) in \( n \) (as shown in Theorem~\ref{p1}~(i)) and the convergence \( \bar q_n \to Q \) in \( C^2_{\mathrm{loc}}([0, \infty)) \), along with the boundary condition
		\[
		\bar q_n(\infty) = Q(\infty) = 1.
		\]
		This completes the proof. \qed

\end{document}